\documentclass[a4paper,9pt]{article}
\usepackage{amsmath, amsthm, amsfonts}
\usepackage{amssymb}
\usepackage{latexsym}
\usepackage{verbatim}

\usepackage[all,cmtip]{xy}
\usepackage{hyperref}

\input{xypic}

\usepackage{enumerate}
\usepackage{amssymb}
\usepackage{array}

\DeclareMathAlphabet{\mathfr}{U}{euf}{m}{n}

\newtheorem{teo}{Theorem}[section]

\newtheorem{rem}{Remark}[section]

\newtheorem{lema}{Lemma}[section]
\newtheorem{coro}{Corollary}[section]
\newtheorem{prop}{Proposition}[section]

\newcommand{\Q}{\mathbb Q}

\newcommand{\Gal}{\mathrm{Gal}}
\newcommand{\R}{\mathbb R}
\newcommand{\Z}{\mathbb Z}

\newcommand{\F}{\mathbb F}
\newcommand{\GL}{\mathrm{GL}}
\newcommand{\PGL}{\mathrm{PGL}}
\newcommand{\End}{\operatorname{End}}
\newcommand{\Hom}{\operatorname{Hom}}
\newcommand{\Frob}{\operatorname{Frob}}
\newcommand{\Aut}{\operatorname{Aut}}
\newcommand{\Ind}{\operatorname{Ind}}
\newcommand{\Inf}{\operatorname{Inf}}
\newcommand{\Log}{\operatorname{Log}}

\newcommand{\E}{\operatorname{E}}

\newcommand{\Res}{\operatorname{Res}}
\newcommand{\Tr}{\operatorname{Tr}}
\newcommand{\Ker}{\operatorname{Ker}}

\numberwithin{equation}{section}
\title{Artin representations attached to pairs of isogenous abelian varieties}
\author{Francesc Fit\'e}
\date{\today}
\begin{document}
\maketitle

\begin{abstract} Given a pair of abelian varieties defined over a number field $k$ and isogenous over a finite Galois extension $L/k$, we define a rational Artin representation of the group $\Gal(L/k)$ that shows a global relation between the $L$-functions of each variety and provides certain information about their decomposition up to isogeny over $L$. We study several properties of these Artin representations. As an application, for each curve $C'$ in a family of twists of a certain genus $3$ curve $C$, we explicitly compute the Artin representation attached to the Jacobians of $C$ and $C'$ and show how this Artin representation can be used to determine the $L$-function of the curve $C'$ in terms of the $L$-function of $C$. Moreover, from this Artin representation, we are able to compute the moments of the Sato-Tate distributions of the traces of the local factors of the curve $C'$.
\end{abstract}

\section{Introduction}

Let $A$ be an abelian variety of dimension $g$ defined over a number field $k$. Fix an algebraic closure $\overline k$ of $k$. All the extensions of $k$ that we will consider will be contained in $\overline k$. For a prime $\ell$, denote by $T_\ell(A)$ the $\ell$-adic Tate module of $A$ and write $V_\ell(A)=T_\ell(A)\otimes \Q$.  There is an action of the absolute Galois group $G_k=\Gal(\overline k/k)$ on $V_\ell(A)$, which gives an $\ell$-adic representation of dimension $2g$
$$\varrho_A \colon G_k\rightarrow \Aut_{\Q_\ell}(V_\ell(A))\simeq\GL_{2g}(\Q_\ell)\,.$$
As $\ell$ varies, the family $\{\varrho_A \}$ constitutes a strictly compatible system of rational $\ell$-adic representations in the sense of \cite{Ser89}. Let $\overline{\mathfrak{p}}$ be a prime of $\overline k$ lying over a prime~$\mathfrak{p}$ of $k$ not dividing $\ell$, let $I_{\overline{\mathfrak{p}}}$ be its inertia group and let $\Frob_{\overline{\mathfrak{p}}}$ be a Frobenius element over $\mathfrak{p}$. The polynomial
$$L_\mathfrak{p}(A/k,T)=\det(1-\varrho_A (\Frob_{\overline{\mathfrak{p}}})T;V_\ell(A)^{I_{\overline{\mathfrak{p}}}})$$
has integer coefficients and does not depend on $\ell$. A prime $\mathfrak{p}$ of $k$ is a prime of good reduction for $A$ if and only if the action of $I_{\overline{\mathfrak{p}}}$ through $\varrho_A $ is trivial on $V_\ell(A)$ (see \cite{ST68}). In this case, one has $L_\mathfrak{p}(A/k,T)=\prod_{i=1}^{g}(1-\alpha_iT)(1-\overline\alpha_iT)$, where the $\alpha_i$ are complex numbers such that $\alpha_i\overline\alpha_i=N \mathfrak{p}$. Here $N\mathfrak p$ stands for the norm of $\mathfrak p$, i.e., the size of the residue field of $\mathfrak p$. The $L$-function of $A/k$ is defined as
$$L(A/k,s)=\prod_\mathfrak{p} L_{\mathfrak{p}}(A/k,N\mathfrak p ^{-s})^{-1}\,,$$
where the product runs over all primes in $k$.

Let $A'$ be an abelian variety defined over $k$ and assume that there exists an isogeny $\phi\colon A\rightarrow A'$ defined over a finite Galois extension $L/k$.  By functoriality,~$\phi$ induces an isomorphism $V_\ell(A)\rightarrow V_\ell(A')$ of $\Q_\ell[G_L]$-modules. Let $\mathfrak{P}$ be a unramified prime of $L$ of norm $N\mathfrak{P}=(N\mathfrak{p})^f$. Consider
$$L_\mathfrak p(A/k,T)=\prod_{i}(1-\alpha_iT)\,,\qquad L_\mathfrak{p}(A'/k,T)=\prod_{i}(1-\beta_iT)\,.$$
Recall that the local factors $L_\mathfrak{P}(A/L,T)$ and $L_\mathfrak{P}(A'/L,T)$ coincide. Since $\mathfrak P$ is a unramified prime of $L$, $V_\ell(A)^{I_{\overline{\mathfrak p}}}=V_\ell(A)^{I_{\overline{\mathfrak P}}}$ and $V_\ell(A')^{I_{\overline{\mathfrak p}}}=V_\ell(A')^{I_{\overline{\mathfrak P}}}$. Moreover, since a Frobenius element $\Frob_{\overline{\mathfrak P}}$ over $\mathfrak{P}$ equals the $f$-power of a Frobenius element $\Frob_{\overline{\mathfrak p}}$ over $\mathfrak{p}$, we have
$$\prod_{i}(1-\alpha_i^fT)=L_\mathfrak{P}(A/L,T)=L_\mathfrak{P}(A'/L,T)=\prod_{i}(1-\beta_i^fT)\,.$$
By reordering the roots, if necessary, we obtain that for $i=1,\dots,\dim V_\ell(A)^{I_{\overline{\mathfrak p}}}$ one has that $\alpha_i^f=\beta_i^f$, i.e., the roots $\alpha_i$ and $\beta_i$ differ by a root of unity. The aim of this article is to study to what extent these roots of unity can be seen as the eigenvalues of the images of a certain Artin representation of the group $\Gal(L/k)$. We will make this idea more precise below. For this aim, it will be helpful  to give an alternative argument of the existence of these roots of unity.

Let $G$ denote $\Gal(L/k)$ until the end of this section. The semisimple module $\Q[G]$ decomposes as a direct sum $\bigoplus_\varrho\Q[G]_\varrho$ indexed by the rational irreducible representations of $G$, where~$\Q[G]_\varrho\simeq n_\varrho\cdot\varrho$ denotes the $\varrho$-isotypic component of $\Q[G]$. It follows from \cite{MRS07}, theorem $4.5$, that for every prime $\mathfrak p$ of~$k$ one has that
\begin{equation}\label{remid}
L_\mathfrak{p}(\Res^L_kA/k,T)=\prod_\varrho
L_{\mathfrak{p}}(A/k,\varrho,T)^{n_\varrho}\,,
\end{equation}
where the product on the right-hand side of the equality runs through all rational irreducible representations of $G$. For $\mathfrak p$ a prime of $k$ which is unramified in~$L$, the Rankin-Selberg polynomial $L_{\mathfrak{p}}(A/k,\varrho,T)$ is the  polynomial whose roots are all the products of roots of $L_{\mathfrak{p}}(A /k,T)$ and roots of $\det(1-\varrho(\Frob_{\mathfrak p})T)$. Since
$L_\mathfrak{p}(\Res^L_kA/k,T)$ and $L_\mathfrak{p}(\Res^L_kA'/k,T)$ coincide, we deduce that $L_{\mathfrak{p}}(A'/k,T)$ divides $\prod_\varrho L_{\mathfrak{p}}(A/k,\varrho,T)^{n_\varrho}$ and we thus recover the existence of the already mentioned roots of unity.

Moreover, identity (\ref{remid}) can be understood in terms of the Tate modules, asserting that
\begin{equation}
V_\ell(\Res^L_kA)\simeq \bigoplus_\varrho n_\varrho\cdot\varrho\otimes V_\ell(A)\simeq \Q[G]\otimes V_\ell(A)
\end{equation}
are isomorphic as $\Q_\ell[G_k]$-modules. Since $V_\ell(\Res^L_kA)$ and $V_\ell(\Res^L_kA')$ are isomorphic, the previous conclusion can now be rephrased by saying that $V_\ell(A')$ is a sub-$\Q_\ell[G_k]$-module of
$ \Q[G]\otimes V_\ell(A)$.
It now arises the question of wether a rational representation $\theta$ of~$G$ of dimension smaller than~$|G|$ can be defined satisfying
\begin{equation}\label{propprin}
V_\ell(A')\subseteq \theta\otimes V_\ell(A)
\end{equation}
as $\Q_\ell[G_k]$-modules.

We now present a situation, where such a representation always exists. Consider again the decomposition $\Q[G]\simeq\bigoplus_\varrho\Q[G]_\varrho$. For a rational irreducible representation~$\varrho$ of~$G$, let $\mathcal I_\varrho$ be $\Q[G]_\varrho\cap\Z[G]$. In \cite{MRS07}, attached to $\mathcal I_\varrho$, an abelian variety
$$A_\varrho:=\mathcal I_\varrho\otimes_\Z A$$
defined over $k$ is constructed. This construction is given in the context of commutative algebraic groups, but we will only be concerned with abelian varieties (for several explicit examples of this construction we refer the reader to \cite {Sil08}). It is shown that $A_\varrho$ is isomorphic over $L$ to $A^{n_\varrho\cdot\dim\varrho}$. Now property (\ref{propprin}) holds if one takes $\theta=\varrho$. Indeed, one has
$$V_\ell(A_\varrho)\simeq n_\varrho\cdot \varrho\otimes V_\ell(A)\subseteq \varrho\otimes V_\ell(A^{n_\varrho\cdot\dim\varrho})\,,$$
where for the first isomorphism we have applied theorem $2.2$ in \cite{MRS07}.
In particular, for $E$ an elliptic curve and $E_\chi$ the twist given by the quadratic character $\chi$ of a quadratic extension $L/k$, one can take $\theta=\chi$.

In section $2$, we define a rational Artin representation $\theta (A,A';L/k)$ and, for every prime $\ell$, a $\Q_\ell[G_k]$-module $W_\ell$ such that the representation $\theta_\ell(A,A';L/k)$ that affords satisfies that $\theta_\ell(A,A';L/k)\simeq \Q_\ell\otimes \theta(A,A';L/k)$.

We start section $3$ proving that $\theta (A,A';L/k)$ satisfies property (\ref{propprin}) as a consequence of the fact that $V_\ell(A')$ is a submodule of $W_\ell\otimes V_\ell(A)$. Besides, we investigate several properties of $\theta (A,A';L/k)$, such as its behavior under the change of the field extension, a characterization of its faithfulness, etc. In particular, we show that when we consider the representation $\theta (A_\varrho,A^{n_\varrho\cdot\dim\varrho};L/k)$ corresponding to the particular case of the twisted abelian varieties $A^{n_\varrho\cdot\dim\varrho}$ and $A_\varrho$, one recovers a multiple of the original representation $\varrho$.

Finally, in section $4$, as an example of the tools developed, we use this Artin representation to compute the $L$-functions of a family of twists $C'$ of a certain modular genus $3$ curve $C$. The \emph{global} control of the roots of unity relating the local factors of $C$ and $C'$ permits to compute the moments of the Sato-Tate distributions of the traces of $C'$. This is not a special feature of the particular example considered, but rather applies in a much wider context (see for example \cite{FS12}). 

\textbf{Acknowledgements}. I want to sincerely thank J.-C. Lario for providing many inspiring ideas, and offering constant help and support. I am also grateful to X. Guitart and J. Quer for carefully reading a preliminary version of this article. I was finantially supported by grants 2009 SGR 1220 and MTM2009-13060-C02-01.

\section{Definition and rationality }

The representation $\varrho_A $ endows $V_\ell(A)$ with a structure of $\Q_\ell[G_k]$-module, as we have already mentioned. We will denote by $V_\ell(A)^*$ its dual, that is the $\Q_\ell[G_k]$-module with underlying $\Q_\ell$-vector space $\Hom_{\Q_\ell}(V_\ell(A),\Q_\ell)$ endowed with the action given by $\varrho_A ^*$, the contragredient representation of $\varrho_A $.

Let $G$ be a group, $F$ a field and~$V$ an~$F$-vector space of dimension $n$. For a representation $\varrho\colon G\rightarrow \Aut_F(V)$, its contragredient representation $\varrho^*$ satisfies $(\varrho^*(\sigma)\lambda)(v)=\lambda(\varrho(\sigma^{-1})v)$ for any $\sigma \in G$, $\lambda\in\Hom_{F}(V,F)$ and $v\in V$. By choosing a basis in $V$, we have $\Aut_F(V)\simeq \GL_{n}(F)$ and we can see $\varrho(\sigma)$ as a matrix with coefficients in $F$. By taking the dual basis in $\Hom_{F}(V,F)$ of the taken basis in $V$, one has that $\varrho^*(\sigma)=(\varrho(\sigma)^{-1})^t$, where ${}^t$ indicates transposition of matrices.

Consider now the $\Q_\ell[G_k]$-module $V_\ell(A)^*\otimes V_\ell(A')$, where the action of $G_k$ is given by $\varrho_A ^*\otimes\varrho_{A'} $ and denote by $$W_\ell=(V_\ell(A)^*\otimes V_\ell(A'))^{G_L}$$ the subspace of the elements invariant under the action of the subgroup $G_L=\Gal(\overline k/L)$. In general, for $F$ a field, $G$ a group, $H$ a normal subgroup of~$G$, and~$V$ an~$F[G]$-module (with the action of $G$ on $V$ denoted by left exponentiation) and $v\in V^H$, $h\in H$, and $g\in G$, we have ${}^{h}(^gv)={}^g(^{g^{-1}hg}v)={}^g(^{h'}v)={}^gv$, where $h'=g^{-1}hg$ is an element of $H$. That is, $V^H$ is endowed with a structure of $F[G/H]$-module. As a consequence, we have the following lemma.

\begin{lema}\label{xor} The space $W_\ell$ acquires a structure of $\Q_\ell[\Gal(L/k)]$-module.
\end{lema}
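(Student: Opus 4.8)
The plan is to deduce the statement directly from the general observation recorded in the paragraph preceding the lemma, instantiated at a suitable choice of data. First I would set $F=\Q_\ell$, $G=G_k$, and $H=G_L=\Gal(\overline k/L)$. Since $L/k$ is a Galois extension, $G_L$ is a \emph{normal} subgroup of $G_k$, and the quotient $G_k/G_L$ is canonically identified with $\Gal(L/k)$. This is the only place where the hypothesis on $L/k$ genuinely enters.

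Next I would take $V$ to be the $\Q_\ell$-vector space $V_\ell(A)^*\otimes V_\ell(A')$ equipped with the $G_k$-action afforded by $\varrho_A^*\otimes\varrho_{A'}$. That this is indeed a $\Q_\ell[G_k]$-module is formal: $V_\ell(A)$ and $V_\ell(A')$ carry $\Q_\ell[G_k]$-module structures via $\varrho_A$ and $\varrho_{A'}$; the contragredient construction recalled above turns the first into a $\Q_\ell[G_k]$-module structure on $V_\ell(A)^*$; and the tensor product over $\Q_\ell$ of two $\Q_\ell[G_k]$-modules is again one, with the diagonal action. In particular the action takes values in $\Aut_{\Q_\ell}(V)$, so it preserves the $\Q_\ell$-linear structure.

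With this data in place, the general fact cited just above the lemma applies verbatim: for a normal subgroup $H\trianglelefteq G$ and an $F[G]$-module $V$, the subspace $V^H$ of $H$-invariants is a $\Q_\ell$-subspace stable under $G$, and the induced $G$-action factors through $G/H$, since ${}^h({}^gv)={}^g({}^{g^{-1}hg}v)={}^gv$ for $v\in V^H$, $h\in H$, $g\in G$. Applying this with the above choices shows that $W_\ell=V^{G_L}=(V_\ell(A)^*\otimes V_\ell(A'))^{G_L}$ is a $\Q_\ell[G_k/G_L]$-module, hence a $\Q_\ell[\Gal(L/k)]$-module through the identification $G_k/G_L\simeq\Gal(L/k)$.

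I do not anticipate any real obstacle: the statement is essentially a direct specialization of the preceding remark, and the only subtlety worth making explicit is the well-definedness of the quotient action (independence of the chosen coset representative), which is precisely the computation the remark supplies. The verification that $V_\ell(A)^*\otimes V_\ell(A')$ is a $\Q_\ell[G_k]$-module and that $G_L$ is normal in $G_k$ are the two routine points to spell out along the way.
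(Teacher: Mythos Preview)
Your proposal is correct and follows precisely the approach indicated in the paper: the lemma is stated immediately after the general observation about $V^H$ inheriting an $F[G/H]$-module structure, with the phrase ``As a consequence'' signaling that one simply specializes that observation to $F=\Q_\ell$, $G=G_k$, $H=G_L$, and $V=V_\ell(A)^*\otimes V_\ell(A')$. You have spelled out exactly this specialization, including the normality of $G_L$ in $G_k$ and the identification $G_k/G_L\simeq\Gal(L/k)$, so there is nothing to add.
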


We will denote by $\theta_\ell(A,A';L/k)$ the representation of $\Gal(L/k)$ afforded by the module $W_\ell$. Observe that for every prime $\mathfrak p$ of good reduction for $A$ and $A'$, the eigenvalues of $\theta_\ell(A,A';L/k)(\Frob_{\mathfrak{p}})$ are roots of unity obtained as quotients of roots of $L_\mathfrak p(A'/k,T)$ and roots of $L_\mathfrak p(A/k,T)$.

To investigate the properties of this representation, we need to recall some basic properties of tensor products, duals and morphisms of modules. We summarize them in the auxiliary result below. For $G$ a group and $F$ a field, let $V$ and $W$ be $F[G]$-modules. The $F$-vector space $\Hom_F(V,W)$ becomes an $F[G]$-module via the action defined as follows: for $g\in G$, $\lambda\in \Hom_F(V,W)$ and $v\in V$, we have
\begin{equation}\label{act}
(^g\lambda)(v)={}^g\lambda(^{g^{-1}}v)\,.
\end{equation}
Note that by taking $W=F$ with the trivial action of $G$, the module $\Hom_F(V,W)$ is just the dual module of $V$.
\begin{lema}\label{ret} Let $V$, $W$, $W_1$ and $W_2$ be $F[G]$-modules, which are of finite dimension as $F$-vector spaces. Let $H$ be a normal subgroup of $G$. We have the following isomorphisms:
\begin{enumerate}[i)]
\item $V^*\otimes W \simeq \Hom_F(V,W)$ as $F[G]$-modules.
\item $\Hom_{F}(V,W_1\otimes W_2)\simeq \Hom_{F}(V\otimes W_1^*,W_2)$ as $F[G]$-modules.
\item $(V^H)^*\simeq (V^*)^H$ as $F[G/H]$-modules.
\end{enumerate}
\end{lema}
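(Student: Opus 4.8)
The plan is to treat the three isomorphisms uniformly: in each case write down the obvious natural $F$-linear map, observe that it is bijective because all the modules are finite-dimensional over $F$, and then verify $G$-equivariance (resp. $G/H$-equivariance) by unwinding the definitions of the actions involved --- the contragredient action on duals, the diagonal action on tensor products, and the action $(\ref{act})$ on $\Hom$-spaces. Parts i) and ii) are then entirely routine; the only point that needs genuine care is part iii).

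For part i) I would use the standard map $\Psi\colon V^*\otimes W\to\Hom_F(V,W)$ determined on pure tensors by $\Psi(\lambda\otimes w)(v)=\lambda(v)\,w$. It is injective, and since both sides have $F$-dimension $\dim_F V\cdot\dim_F W$ it is an isomorphism of $F$-vector spaces. For $g\in G$, the map $\Psi({}^g\lambda\otimes{}^g w)$ sends $v\in V$ to $\lambda({}^{g^{-1}}v)\cdot{}^g w$, while $\big({}^g\Psi(\lambda\otimes w)\big)$ sends $v$ to ${}^g\!\big(\lambda({}^{g^{-1}}v)\,w\big)=\lambda({}^{g^{-1}}v)\cdot{}^g w$, the scalar $\lambda({}^{g^{-1}}v)\in F$ being $G$-invariant; so $\Psi$ is $F[G]$-linear.

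For part ii) I would not compute directly but reduce to part i) together with the standard finite-dimensional identifications $V^{**}\simeq V$ and $(V\otimes W)^*\simeq V^*\otimes W^*$ of $F[G]$-modules. Part i) gives $\Hom_F(V,W_1\otimes W_2)\simeq V^*\otimes W_1\otimes W_2$; likewise $\Hom_F(V\otimes W_1^*,W_2)\simeq(V\otimes W_1^*)^*\otimes W_2$, which by the two quoted identifications is $\simeq V^*\otimes W_1^{**}\otimes W_2\simeq V^*\otimes W_1\otimes W_2$. All the arrows are $F[G]$-linear, so their composite is the asserted isomorphism. (Alternatively, the contraction $f\mapsto\big(v\otimes\mu\mapsto(\mu\otimes\operatorname{id}_{W_2})(f(v))\big)$, for $\mu\in W_1^*$, can be written down and checked to be equivariant on the nose.)

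For part iii) the natural candidate is the restriction map $r\colon V^*\to(V^H)^*$, $\lambda\mapsto\lambda|_{V^H}$. Since $H$ is normal in $G$ the subspace $V^H$ is $G$-stable, so $V^H\hookrightarrow V$, and hence $r$, is a morphism of $F[G]$-modules; as $H$ acts trivially on $V^H$ and therefore on $(V^H)^*$, restricting $r$ to $H$-invariants produces an $F[G/H]$-linear map $r^H\colon(V^*)^H\to(V^H)^*$, and what remains is to check that $r^H$ is bijective. Here I would invoke the fact that in the setting of the paper $H\le\Gal(L/k)$ is finite and $|H|$ is invertible in $F$, so the averaging operator $e_H\colon v\mapsto\frac{1}{|H|}\sum_{h\in H}{}^hv$ makes sense; since $H$ is normal, $e_H$ commutes with the action of $G$ and is a $G$-equivariant projector of every $F[G]$-module onto its $H$-invariants. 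Surjectivity of $r^H$: extend $\mu\in(V^H)^*$ to any $\tilde\mu\in V^*$; then $e_H\tilde\mu\in(V^*)^H$ and a one-line computation gives $(e_H\tilde\mu)|_{V^H}=\mu$. Injectivity: if $\lambda\in(V^*)^H$ vanishes on $V^H$, then $\lambda=e_H\lambda$ yields $\lambda(v)=(e_H\lambda)(v)=\lambda(e_Hv)$ for all $v\in V$, and $e_Hv\in V^H$ forces $\lambda=0$. (Equivalently: $e_H$ splits $V=V^H\oplus W'$ as $F[G]$-modules with $(W')^H=0$, and dualising and taking $H$-invariants gives $(V^*)^H\simeq(V^H)^*\oplus((W')^*)^H=(V^H)^*$.) This reliance on the idempotent $e_H$ --- legitimate because $\Gal(L/k)$ is finite and $F$ has characteristic zero, while the statement genuinely fails for a general (e.g.\ infinite, or modular) normal subgroup --- is the main, though still minor, obstacle in the proof.
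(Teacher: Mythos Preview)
Your proof is correct. Parts i) and ii) coincide with the paper's approach: the paper simply cites a reference for i) whereas you write out the standard map and check equivariance, and for ii) both you and the paper derive the isomorphism from i) together with $(V\otimes W)^*\simeq V^*\otimes W^*$ via the same chain $\Hom_F(V,W_1\otimes W_2)\simeq V^*\otimes W_1\otimes W_2\simeq(V\otimes W_1^*)^*\otimes W_2\simeq\Hom_F(V\otimes W_1^*,W_2)$.

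For part iii) you take a genuinely different route. The paper identifies $V^H$ with $\Ind_G^{G/H}V:=F[G/H]\otimes_{F[G]}V$ (citing an exercise in Serre) and then uses that induction commutes with duals to obtain $(V^H)^*\simeq\Ind_G^{G/H}(V^*)\simeq(V^*)^H$ in one line. You instead work directly with the restriction map $(V^*)^H\to(V^H)^*$ and prove bijectivity using the averaging idempotent $e_H$. Both arguments rest on the same hidden hypothesis --- that $H$ is finite with $|H|$ invertible in $F$ --- since the identification of $F[G/H]\otimes_{F[G]}V$ (a priori the \emph{co}invariants $V_H$) with the invariants $V^H$, as well as the compatibility of induction with duals, require exactly this. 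Your approach has the virtue of making that hypothesis and its role explicit and of being entirely self-contained; the paper's is terser once one grants the cited facts.
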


\begin{proof}
For $i)$ we refer to lemma 3.12 of \cite{Kar92}. Although throughout this reference $G$ is always taken to be finite, we emphasize that the proof of this fact makes no use of this condition. Assertion $ii)$ follows from $i)$ and the fact that $(V\otimes W)^*\simeq V^*\otimes W^*$ (see lemma 3.11 of \cite{Kar92}):
$$
\begin{array}{l@{\,\simeq\,}l}
\Hom_{F}(V,W_1\otimes W_2) & \displaystyle{V^*\otimes (W_1\otimes W_2)}\\[6pt]
 & \displaystyle{(V\otimes W_1^*)^*\otimes W_2}\\[6pt]
 & \displaystyle{\Hom_{F}(V\otimes W_1^*,W_2)\,.}\\[6pt]
\end{array}
$$
For $iii)$ we need a new description of the subspace of invariants. Let $\alpha\colon G\rightarrow K$ be a morphism of groups. Denote $\Ind_G^KV$ the $F[G]$-module $F[K]\otimes_{F[G]}V$, where we can view $F[K]$ as an $F[G]$-module thanks to the morphism $\alpha$. If $\alpha$ is injective, the  $F[K]$-module $\Ind_G^KW$ coincides with usual notion of induced module from the subgroup $G$ to $K$. If $\alpha$ is surjective, the  $F[K]$-module $\Ind_G^KV$ is isomorphic to the subspace of $V$ consisting of the elements invariant under $\Ker(\alpha)$ (see \cite{Ser77}, exercise 7.1). Let $K$ be $G/H$ and let $\alpha$ denote the natural projection. Since the operations of inducing and taking duals commute, we have
$$ (V^H)^* \simeq \Ind_{G}^{G/H}(V)^*\simeq \Ind_{G}^{G/H}(V^*)\simeq (V^*)^H\,.$$
\end{proof}

\begin{rem}\label{after} With the notation of the previous lemma, let $\Hom_{F[H]}(V,W)$ denote the $F$-vector space of homomorphisms from $V$ to $W$ which commute with the actions of $H$ on $V$ and $W$. Equivalently, $\Hom_{F[H]}(V,W)$ can be characterized as the space $(\Hom_F(V,W))^H$ of the invariant elements in $\Hom_F(V,W)$ under the action (\ref{act}) and so, as justified before lemma \ref{xor}, it acquires a structure of $F[G/H]$-module. It follows that $\Hom_{F[H]}(V,W)$ is isomorphic to $(V^*\otimes W)^H$ and that $\Hom_{F[H]}(V,W_1\otimes W_2)$ is isomorphic to $\Hom_{F[H]}(V\otimes W_1^*,W_2)$.
\end{rem}

\begin{lema}\label{selfdual}  The module $W_\ell$ is selfdual.
\end{lema}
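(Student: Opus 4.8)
The plan is to show that $W_\ell = (V_\ell(A)^* \otimes V_\ell(A'))^{G_L}$ is isomorphic to its own dual as a $\Q_\ell[\Gal(L/k)]$-module, and the key input is the existence of the isogeny $\phi\colon A \to A'$ over $L$, which yields an isomorphism $V_\ell(A) \simeq V_\ell(A')$ of $\Q_\ell[G_L]$-modules, together with the Weil pairing, which makes $V_\ell(A)$ (and $V_\ell(A')$) ``nearly'' selfdual. More precisely, the Weil pairing identifies $V_\ell(A)^*$ with $V_\ell(A)(1)$ as $\Q_\ell[G_k]$-modules, where $(1)$ denotes a Tate twist by the cyclotomic character; the same holds for $A'$. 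Since the cyclotomic character is a character of $G_k$ that is unaffected by the invariants-under-$G_L$ operation in the sense that matters here (the twist by a power of the cyclotomic character commutes with tensoring and taking $G_L$-invariants up to a global twist that cancels), I expect the Tate twists to cancel when one dualizes $W_\ell$.

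Concretely, first I would apply part iii) of Lemma~\ref{ret} with $V = V_\ell(A)^* \otimes V_\ell(A')$, $G = G_k$ and $H = G_L$, to get
$$W_\ell^* = \big((V_\ell(A)^*\otimes V_\ell(A'))^{G_L}\big)^* \simeq \big((V_\ell(A)^*\otimes V_\ell(A'))^*\big)^{G_L}$$
as $\Q_\ell[\Gal(L/k)]$-modules. Then, using $(X\otimes Y)^* \simeq X^* \otimes Y^*$ and $X^{**}\simeq X$ (both standard, cf. lemma 3.11 of \cite{Kar92}), the right-hand side becomes $(V_\ell(A)\otimes V_\ell(A')^*)^{G_L}$. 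So it remains to exhibit an isomorphism
$$(V_\ell(A)\otimes V_\ell(A')^*)^{G_L} \simeq (V_\ell(A)^*\otimes V_\ell(A'))^{G_L}$$
of $\Q_\ell[\Gal(L/k)]$-modules. For this I would invoke the Weil pairing isomorphisms $V_\ell(A)^* \simeq V_\ell(A)\otimes \chi_\ell^{-1}$ and $V_\ell(A')^* \simeq V_\ell(A')\otimes \chi_\ell^{-1}$ of $\Q_\ell[G_k]$-modules, where $\chi_\ell$ is the $\ell$-adic cyclotomic character. Substituting into both sides: the left-hand side becomes $(V_\ell(A)\otimes V_\ell(A')\otimes\chi_\ell^{-1})^{G_L}$ and the right-hand side becomes $(V_\ell(A)\otimes\chi_\ell^{-1}\otimes V_\ell(A'))^{G_L}$. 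These are manifestly the same module (tensor product is commutative up to canonical isomorphism), which closes the argument. Alternatively, and perhaps more cleanly, one uses the isogeny to replace $V_\ell(A')$ by $V_\ell(A)$ as a $G_L$-module and reduces to the selfduality-up-to-twist of a single Tate module.

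The step I expect to be the main obstacle is the bookkeeping with the Tate twist: one must check that the character $\chi_\ell$ restricted to $G_L$ is handled consistently on both sides so that the twist genuinely cancels and the resulting isomorphism is one of $\Q_\ell[\Gal(L/k)]$-modules, not merely of $\Q_\ell$-vector spaces. The point is that $\chi_\ell^{-1}$ appears symmetrically in the expressions for $W_\ell^*$ and $W_\ell$ after the Weil-pairing substitution, so whatever residual $\Gal(L/k)$-action it contributes (via $\chi_\ell$ viewed on $G_k$ acting on the $G_L$-invariants) is identical on both; hence the canonical ``swap the two tensor factors'' map is $\Gal(L/k)$-equivariant. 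Once this is spelled out, the selfduality of $W_\ell$ follows formally from Lemma~\ref{ret} and the standard properties of duals and tensor products recalled in the excerpt.
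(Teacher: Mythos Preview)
Your argument is correct and, in fact, a bit cleaner than the paper's. Both proofs conclude by invoking part~iii) of Lemma~\ref{ret} to pass from selfduality of $V_\ell(A)^*\otimes V_\ell(A')$ as a $\Q_\ell[G_k]$-module to selfduality of $W_\ell$; the difference is in how that selfduality is established. The paper argues character-theoretically: since the module is semisimple (Faltings) and $\Q_\ell$ has characteristic zero, it suffices to match traces, and by Cebotarev it suffices to do so at Frobenius elements of good primes; there one computes $\Tr\varrho_A^*(\Frob_{\mathfrak p})=\frac{1}{N\mathfrak p}\Tr\varrho_A(\Frob_{\mathfrak p})$ from the Weil bound $\alpha_i\overline\alpha_i=N\mathfrak p$, whence $\Tr(\varrho_A^*\otimes\varrho_{A'})=\Tr(\varrho_A\otimes\varrho_{A'}^*)$. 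You instead use the Weil pairing directly as an isomorphism $V_\ell(A)^*\simeq V_\ell(A)\otimes\chi_\ell^{-1}$ of $\Q_\ell[G_k]$-modules (and likewise for $A'$), so that both $V_\ell(A)^*\otimes V_\ell(A')$ and its dual $V_\ell(A)\otimes V_\ell(A')^*$ are visibly isomorphic to $V_\ell(A)\otimes V_\ell(A')\otimes\chi_\ell^{-1}$. This is the same fact in structural rather than numerical form; your route avoids the appeal to semisimplicity and Cebotarev, at the cost of invoking the existence of a $k$-rational polarization (needed to identify $V_\ell(A^\vee)$ with $V_\ell(A)$), which you might want to make explicit. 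Note also that the isogeny $\phi$ you flag as a ``key input'' plays no role in either argument; the selfduality of $W_\ell$ holds for any pair of abelian varieties $A$, $A'$ over $k$, isogenous or not.
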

\begin{proof} First, we claim that the module $V_\ell(A)^*\otimes V_\ell(A')$ is selfdual. Since $\Q_\ell$ has characteristic zero and $V_\ell(A)^*\otimes V_\ell(A')$ is semisimple, it suffices to check that for every $\sigma\in G_k$ one has $ \Tr\varrho_A ^*\otimes\varrho_{A'} (\sigma)=\Tr\varrho_A \otimes\varrho_{A'} ^*(\sigma)$ (see \cite{Ser89}, chapter~$1$, section $2.1$). Since $\Tr$ is a continuous function, by the Cebotarev Density Theorem, it is enough to check it for the Frobenius elements corresponding to primes of good reduction for $A$ and~$A'$. Let $\sigma=\Frob_{\mathfrak p}$ be such an element. Note that if $\Tr\varrho_A (\sigma)=\sum_i\alpha_i+\overline\alpha_i$, then
$$\Tr\varrho_A ^*(\sigma)=\sum_i\frac{1}{\alpha_i}+\frac{1}{\overline\alpha_i}=\sum_i\frac{\overline\alpha_i}{N\mathfrak p}+\frac{\alpha_i}{N\mathfrak p}=\frac{1}{N\mathfrak p}\Tr\varrho_A (\sigma)\,.$$
The claim then follows from the formula
$$\Tr\left(\varrho_A ^*\otimes\varrho_{A'}  (\sigma)\right)=\frac{1}{N\mathfrak p}\left(\Tr\varrho_A (\sigma)\cdot\Tr\varrho_{A'} (\sigma)\right)\,.$$
We deduce that $W_\ell=(V_\ell(A)^*\otimes V_\ell(A'))^{G_L}$ is selfdual by applying $iii)$ of lemma~\ref{ret}.
\end{proof}

As a consequence of the fact that $V_\ell(A)^*\otimes V_\ell(A')$ is selfdual, we deduce that $\theta_\ell(A,A';L/k)$ and $\theta_\ell(A',A;L/k)$ are isomorphic.

Let $m$ be the exponent of $\Gal(L/k)$. Observe that $\Tr\theta_\ell(A,A';L/k)$ belongs to $\Q(\zeta_m)$, where $\zeta_m$ stands for a primitive $m$-th root of unity. Then, a theorem of Brauer (see \cite{Ser77}, theorem $24$), conjectured by Schur, guarantees that the representation $\theta_\ell(A,A';L/k)$ can be realized over $\Q(\zeta_m)$, that is, there exists a representation $\tilde\theta_\ell(A,A';L/k)$ with matrices having coefficients in $\Q(\zeta_m)$ (thus, an Artin representation) such that
$$\theta_\ell(A,A';L/k)\otimes\overline\Q_\ell\simeq \tilde\theta_\ell(A,A';L/k)\otimes\overline\Q_\ell\,.$$
It follows from lemma \ref{selfdual}, that $\Tr\theta_\ell(A,A';L/k)$ belongs to $\R\cap\Q(\zeta_m)$. Next, we shall see that in fact $\Tr\theta_\ell(A,A';L/k)$ belongs to $\Q$. Even more, we shall prove that $\theta_\ell(A,A';L/k)$ can be realized over $\Q$.

Let $\Hom_L(A,A')$ stand for the $\Z$-module of homomorphisms from $A$ to~$A'$ which are defined over $L$ and denote the $\Q$-vector space $\Hom_L(A,A')\otimes \Q$ by $\Hom_L^0(A,A')$. Write $\End_L(A)$ for the ring $\Hom_L(A,A)$ and $\End_L^0(A)$ for $\End_L(A)\otimes\Q$. Observe that, since $A$ and $A'$ are defined over $k$, the group $\Gal(L/k)$ acts on $\Hom_L^0(A,A')$. Call $\theta(A,A';L/k)$ the rational representation afforded by the $\Q[\Gal(L/k)]$-module $\Hom_L^0(A,A')$.

\begin{prop}\label{rac} The representation $\theta_\ell(A,A';L/k)$ is realizable over $\Q$. More precisely, one has
$$\theta_\ell(A,A';L/k)\simeq \Q_\ell\otimes \theta(A,A';L/k)\,.$$
\end{prop}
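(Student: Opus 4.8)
The plan is to show that the $\Q_\ell[G_k]$-module $W_\ell=(V_\ell(A)^*\otimes V_\ell(A'))^{G_L}$ is isomorphic to $\Q_\ell\otimes_\Q \Hom_L^0(A,A')$ in a way that respects the $\Gal(L/k)$-action, so that $\theta_\ell(A,A';L/k)\simeq \Q_\ell\otimes\theta(A,A';L/k)$ follows immediately. First I would invoke part $i)$ of Lemma \ref{ret} (together with Remark \ref{after}) to rewrite $W_\ell$ as $\Hom_{\Q_\ell[G_L]}(V_\ell(A),V_\ell(A'))$, the space of $G_L$-equivariant $\Q_\ell$-linear maps; this identification is functorial in the $G_k$-module structure, hence is an isomorphism of $\Q_\ell[\Gal(L/k)]$-modules. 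So the problem reduces to identifying $\Hom_{\Q_\ell[G_L]}(V_\ell(A),V_\ell(A'))$ with $\Q_\ell\otimes_\Q\Hom_L^0(A,A')$ compatibly with the $\Gal(L/k)$-action.

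Next I would bring in the fundamental theorem of Faltings (the Tate conjecture for abelian varieties over number fields, proved over finitely generated fields), which gives a canonical isomorphism
$$\Hom_L(A,A')\otimes_\Z \Z_\ell \;\xrightarrow{\ \sim\ }\; \Hom_{\Z_\ell[G_L]}(T_\ell(A),T_\ell(A'))\,,$$
and hence, after tensoring with $\Q$, a canonical isomorphism $\Hom_L^0(A,A')\otimes_\Q \Q_\ell\simeq \Hom_{\Q_\ell[G_L]}(V_\ell(A),V_\ell(A'))$. The key point I would stress is that this isomorphism is \emph{functorial}: an element $\sigma\in\Gal(L/k)$, lifted to $G_k$, acts on both sides, on the left by transport of structure on $\Hom_L^0(A,A')$ (possible precisely because $A$ and $A'$ are defined over $k$, so $\sigma$ conjugates an $L$-isogeny to an $L$-isogeny) and on the right by the action \eqref{act} coming from $\varrho_A^*\otimes\varrho_{A'}$; and the Faltings isomorphism, being canonical and compatible with the Galois action of the full $G_k$, intertwines these two actions. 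Concretely, for $\phi\in\Hom_L^0(A,A')$ one checks on Tate modules that ${}^\sigma(V_\ell(\phi))=V_\ell({}^\sigma\phi)$, which is just the statement that applying a Galois automorphism commutes with the Tate-module functor.

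Stitching the two isomorphisms together yields
$$W_\ell\;\simeq\;\Hom_{\Q_\ell[G_L]}(V_\ell(A),V_\ell(A'))\;\simeq\;\Q_\ell\otimes_\Q\Hom_L^0(A,A')$$
as $\Q_\ell[\Gal(L/k)]$-modules, which is exactly the asserted isomorphism $\theta_\ell(A,A';L/k)\simeq\Q_\ell\otimes\theta(A,A';L/k)$; realizability over $\Q$ is then automatic since $\theta(A,A';L/k)$ is by construction a rational representation. I expect the main obstacle to be purely expository: making the $\Gal(L/k)$-equivariance of the Faltings isomorphism precise without getting tangled in the bookkeeping of how a chosen lift of $\sigma\in\Gal(L/k)$ to $G_k$ acts — one must check that the resulting action on $\Hom_L^0(A,A')$ is independent of the lift (it is, because inner automorphisms by $G_L$ act trivially on $L$-isogenies up to the identification, matching the passage to $V^H$ being a $G/H$-module as recalled before Lemma \ref{xor}) and that it matches the action \eqref{act} under the identification of part $i)$ of Lemma \ref{ret}. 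Apart from that, the argument is a formal assembly of Lemma \ref{ret}, Remark \ref{after}, and Faltings' theorem.
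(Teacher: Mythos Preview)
Your proposal is correct and follows essentially the same approach as the paper: identify $W_\ell$ with $\Hom_{\Q_\ell[G_L]}(V_\ell(A),V_\ell(A'))$ via Remark~\ref{after} (which rests on part~$i)$ of Lemma~\ref{ret}), and then invoke Faltings' theorem to obtain the $\Gal(L/k)$-equivariant isomorphism with $\Q_\ell\otimes_\Q\Hom_L^0(A,A')$. The paper's proof is terser, simply asserting the Faltings isomorphism as one of $\Q_\ell[\Gal(L/k)]$-modules, whereas you spell out why the equivariance holds; but the logical structure is identical.
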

\begin{proof}
The results of Faltings (see \cite{Fal83}) ensure that
\begin{equation}\label{falt}
\Hom_L^0(A,A')\otimes\Q_\ell\simeq \Hom_{\Q_\ell[G_L]}(V_\ell(A),V_\ell(A'))
\end{equation}
as $\Q_{\ell}[\Gal(L/k)]$-modules. Remark \ref{after} together with equation (\ref{falt}) gives that $\theta_\ell(A,A';L/k)$ is isomorphic over $\Q_\ell$ to $\theta(A,A';L/k)$.
\end{proof}

\begin{coro} The family $\{\theta_\ell(A,A';L/k)\}_\ell$ constitutes a strictly compatible system of rational $\ell$-adic representations of the group $\Gal(L/k)$, where the exceptional set of primes consists on the primes of $k$ ramified in $L$. In particular, $\Tr\theta_\ell(A,A';L/k)$ is a rational character of $\Gal(L/k)$ which does not depend on the prime $\ell$.
\end{coro}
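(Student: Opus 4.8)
The plan is to deduce everything from Proposition~\ref{rac}, which identifies $\theta_\ell(A,A';L/k)$ with $\Q_\ell\otimes\theta(A,A';L/k)$, together with the fact that $\{\varrho_A\}_\ell$ and $\{\varrho_{A'}\}_\ell$ are strictly compatible systems of rational $\ell$-adic representations of $G_k$ in the sense of \cite{Ser89}. First I would observe that $\varrho_A^*\otimes\varrho_{A'}$, being built by taking duals and tensor products from two strictly compatible systems, is itself a strictly compatible system of $\ell$-adic representations of $G_k$; its exceptional set is contained in the union of the exceptional sets of $\{\varrho_A\}$ and $\{\varrho_{A'}\}$, i.e.\ in the set $S$ of primes of $k$ of bad reduction for $A$ or $A'$. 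Passing to $G_L$-invariants and then using Lemma~\ref{xor} to view $W_\ell$ as a $\Q_\ell[\Gal(L/k)]$-module, the characteristic polynomial of $\theta_\ell(A,A';L/k)(\Frob_\mathfrak{p})$ for $\mathfrak{p}$ unramified in $L$ and outside $S$ is computed from the $\alpha_i,\beta_i$ as described before Lemma~\ref{ret}, hence has integer (indeed cyclotomic-integer) coefficients; the key point is that this polynomial is independent of $\ell$, which is exactly the content of Proposition~\ref{rac}: the isomorphism there is with the fixed $\Q$-representation $\theta(A,A';L/k)$, whose characteristic polynomials visibly do not depend on $\ell$.

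Next I would pin down the exceptional set. If $\mathfrak{p}$ is a prime of $k$ unramified in $L$, then $\Gal(L/k)$ is unramified at $\mathfrak{p}$ by definition, and since $\theta_\ell(A,A';L/k)$ factors through $\Gal(L/k)$, the action of the inertia group $I_{\overline{\mathfrak{p}}}$ on $W_\ell$ is trivial; thus every prime unramified in $L$ is a prime of good reduction for the system $\{\theta_\ell(A,A';L/k)\}_\ell$, and the exceptional set is contained in the (finite) set of primes of $k$ ramified in $L$. Conversely one expects ramified primes genuinely to be exceptional in general, but only the inclusion is asserted, so nothing further is needed here. Together with the integrality and $\ell$-independence of the local factors at the remaining primes, this shows $\{\theta_\ell(A,A';L/k)\}_\ell$ is a strictly compatible system of rational $\ell$-adic representations of $\Gal(L/k)$ with exceptional set contained in the primes ramified in $L$.

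Finally, for the last sentence: $\Tr\theta_\ell(A,A';L/k)$ is a virtual character a priori with values in $\Q(\zeta_m)$, but Proposition~\ref{rac} exhibits $\theta_\ell(A,A';L/k)$ as the extension of scalars of the genuine $\Q$-representation $\theta(A,A';L/k)$ of $\Gal(L/k)$; hence its trace is the character of $\theta(A,A';L/k)$, which takes values in $\Q$ and is manifestly independent of $\ell$. I do not expect any serious obstacle: the only thing to be careful about is making explicit that forming $\Hom$, duals, tensor products and $H$-invariants preserves strict compatibility and does not enlarge the exceptional set beyond $S$, and then that restricting attention to primes unramified in $L$ further shrinks the exceptional set to the ramified primes — the real input, namely $\ell$-independence of the local factors, has already been supplied by Proposition~\ref{rac} via Faltings' theorem.
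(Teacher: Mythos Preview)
Your proposal is correct and follows essentially the same route as the paper: both arguments rest entirely on Proposition~\ref{rac}, using the identification $\theta_\ell(A,A';L/k)\simeq \Q_\ell\otimes\theta(A,A';L/k)$ to conclude that the characteristic polynomial of $\Frob_{\mathfrak p}$ (for $\mathfrak p$ unramified in $L$) is that of the fixed rational representation $\theta(A,A';L/k)$, hence has rational coefficients and is independent of~$\ell$. Your discussion of the strict compatibility of $\varrho_A^*\otimes\varrho_{A'}$ is not needed once you invoke Proposition~\ref{rac}, and the paper omits it; likewise your careful treatment of the exceptional set is more explicit than the paper's, which simply states the conclusion.
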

\begin{proof} For $\mathfrak p$ a prime of $k$ unramified in $L$, proposition \ref{rac} implies
$$\det(1-\theta_\ell(A,A';L/k)(\Frob_{\mathfrak p})T)=\det(1-\theta(A,A';L/k)(\Frob_{\mathfrak p})T)\,,$$
which is a polynomial with rational coefficients. In particular, for every prime~$\ell$, one has
that $\Tr\theta_\ell(A,A';L/k)$ equals $\Tr\theta(A,A';L/k)\,,$ which is a rational character of $\Gal(L/k)$ which does not depend on the prime~$\ell$.
\end{proof}

\begin{rem}\label{dim} Since $A$ and $A'$ are isogenous over $L$, we have that
$$\dim\theta(A,A';L/k)=\dim_\Q\Hom_L^0(A,A')=\dim_\Q\End_L^0(A)>0\,.$$
\end{rem}
We will refer to $\theta(A,A';L/k)$ as the Artin representation attached to the isogenous abelian varieties $A$ and $A'$ over $L/k$.

\section{Properties of $\theta(A,A';L/k)$}

In the previous section, a rational Artin representation $\theta (A,A';L/k)$ has been attached to a pair of isogenous abelian varieties $A\sim_L A'$ over a finite Galois extension $L/k$. We show that this representation satisfies property (\ref{propprin}) in the Introduction.

\begin{teo}\label{bont} $V_\ell(A')$ is a sub-$\Q_\ell[G_k]$-module of
$\theta (A,A';L/k)\otimes V_\ell(A)\,.$
\end{teo}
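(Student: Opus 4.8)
The plan is to deduce the statement from Proposition \ref{rac} together with the identification \eqref{falt} of Faltings. By Proposition \ref{rac} we may work with $\theta(A,A';L/k)$, the representation afforded by the $\Q[\Gal(L/k)]$-module $\Hom_L^0(A,A')$, and after tensoring with $\Q_\ell$ we must exhibit an injection of $\Q_\ell[G_k]$-modules $V_\ell(A')\hookrightarrow \Hom_{\Q_\ell[G_L]}(V_\ell(A),V_\ell(A'))\otimes V_\ell(A)$. The first step is therefore to recall, as in Remark \ref{after}, that $\Hom_{\Q_\ell[G_L]}(V_\ell(A),V_\ell(A'))$ is nothing but $W_\ell=(V_\ell(A)^*\otimes V_\ell(A'))^{G_L}$ with its natural $\Q_\ell[\Gal(L/k)]$-module structure, inflated to a $\Q_\ell[G_k]$-module; so it suffices to produce a $G_k$-equivariant injection $V_\ell(A')\hookrightarrow W_\ell\otimes V_\ell(A)$.

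The key step is the construction of the evaluation map. Consider the natural pairing
$$
\mathrm{ev}\colon \Hom_{\Q_\ell}(V_\ell(A),V_\ell(A'))\otimes V_\ell(A)\longrightarrow V_\ell(A'),\qquad \lambda\otimes v\longmapsto \lambda(v),
$$
which is $G_k$-equivariant for the natural actions (this is immediate from the definition \eqref{act} of the action on a Hom-module). Restricting the first factor to the $G_L$-invariants gives a $G_k$-equivariant map $W_\ell\otimes V_\ell(A)\to V_\ell(A')$, and I must show it is surjective; then, $\Q_\ell$ having characteristic zero so that the source is a semisimple $G_k$-module and the surjection splits, I obtain the desired $G_k$-equivariant injection $V_\ell(A')\hookrightarrow W_\ell\otimes V_\ell(A)$ by composing a splitting with the inclusion. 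Surjectivity is the substantive point: since $A$ and $A'$ are isogenous over $L$, by \eqref{falt} the module $W_\ell\cong \Hom_L^0(A,A')\otimes\Q_\ell$ contains an isogeny $\phi\colon A\to A'$, i.e.\ an element $\phi\in W_\ell$ inducing an \emph{isomorphism} $V_\ell(A)\xrightarrow{\sim} V_\ell(A')$ of $\Q_\ell$-vector spaces. Then already $\phi\otimes V_\ell(A)\to V_\ell(A')$, $v\mapsto \phi(v)$, is surjective, hence a fortiori so is $\mathrm{ev}$ on all of $W_\ell\otimes V_\ell(A)$.

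I expect the main obstacle to be purely bookkeeping: checking that the evaluation map is $G_k$-equivariant with the conventions fixed in Section~2 (in particular that the inflation of the $\Gal(L/k)$-action on $W_\ell$ to $G_k$ is compatible with restricting the $G_k$-action on $V_\ell(A)^*\otimes V_\ell(A')$ to its $G_L$-invariants), and making sure the splitting of the surjection is taken in the category of $\Q_\ell[G_k]$-modules, which is legitimate because $V_\ell(A')$ is a semisimple $\Q_\ell[G_k]$-module (Faltings). No delicate estimate or hard computation is involved; everything reduces to the existence of an $L$-isogeny and semisimplicity in characteristic zero.
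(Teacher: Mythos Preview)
Your argument is correct and takes a genuinely different route from the paper's. The paper decomposes $V_\ell(A')\simeq\bigoplus_i n_iV_i$ into simple $\Q_\ell[G_k]$-modules and, via the adjunction of Lemma~\ref{ret}~$ii)$, verifies that each $V_i$ already occurs in $(V_\ell(A)^*\otimes V_i)^{G_L}\otimes V_\ell(A)$; it is a multiplicity count rather than the construction of an explicit map. Your evaluation map $W_\ell\otimes V_\ell(A)\to V_\ell(A')$ is more direct, with surjectivity coming immediately from the existence of an $L$-isogeny, and it avoids decomposing into simples. One point deserves sharpening: the phrase ``$\Q_\ell$ having characteristic zero so that the source is a semisimple $G_k$-module'' needs justification, since $G_k$ is infinite. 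The correct reason is that $G_L$ acts trivially on $W_\ell$, so the restriction of $W_\ell\otimes V_\ell(A)$ to $G_L$ is $V_\ell(A)^{\dim W_\ell}$, which is semisimple by Faltings applied over $L$; averaging a $G_L$-equivariant splitting over the finite quotient $\Gal(L/k)$ then yields a $G_k$-equivariant one. Your later parenthetical, that semisimplicity of the \emph{target} $V_\ell(A')$ alone suffices, is not correct by itself---a surjection onto a semisimple module need not split---so retain the first justification. The paper's argument implicitly relies on the same semisimplicity of these tensor products (to pass from ``nonzero $G_L$-invariants'' to a nonzero $\Hom$ in the reverse direction), so neither approach is cheaper on this point.
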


\begin{proof} By proposition \ref{rac}, we have $\theta (A,A';L/k)\otimes V_\ell(A)\simeq W_\ell\otimes V_\ell(A)$. Suppose that $V_\ell(A')\simeq\bigoplus n_i\cdot V_i$ as a sum of simple $\Q_\ell[G_k]$-modules $V_i$. We want to show that $V_i$ appears with multiplicity at least $n_i$ in $W_\ell\otimes V_\ell(A)$. Since
$$W_\ell\otimes V_\ell(A)\simeq\bigoplus_jn_j\cdot(V_\ell(A)^*\otimes V_j)^{G_L}\otimes V_\ell(A)\,,$$
it suffices to see that $V_i$ is a constituent of $(V_\ell(A)^*\otimes V_i)^{G_L}\otimes V_\ell(A)$. Since $V_i$ is simple, this is equivalent to prove that
$\dim_{\Q_\ell}\Hom_{\Q_\ell[G_k]}(V_i,(V_\ell(A)^*\otimes V_i)^{G_L} \otimes V_\ell(A))>0\,.$ Now, $ii)$ of lemma \ref{ret} provides an isomorphism
$$\Hom_{\Q_\ell[G_k]}(V_i,(V_\ell(A)^*\otimes V_i)^{G_L} \otimes V_\ell(A))\simeq\Hom_{\Q_\ell[G_k]}(V_\ell(A)^*\otimes V_i,(V_\ell(A)^*\otimes V_i)^{G_L})\,,$$
from which the result follows provided that $(V_\ell(A)^*\otimes V_i)^{G_L}$ is a non-trivial sub-$\Q_\ell[G_k]$-module of $V_\ell(A)^*\otimes V_i$. Indeed, since $V_\ell(A)\simeq\bigoplus n_i\cdot V_i$ as $\Q_\ell[G_L]$-modules, one has
$$
\begin{array}{ll}
\dim_{\Q_\ell}(V_\ell(A)^*\otimes V_i)^{G_L} & \displaystyle{=\,\dim_{\Q_\ell}\Hom_{\Q_\ell[G_L]}(V_\ell(A), V_i)}\\[6pt]
 & \displaystyle{=\,\sum_j n_j\dim_{\Q_\ell}\Hom_{\Q_\ell[G_L]}(V_j, V_i)}\\[6pt]
 & \displaystyle{\geq\,\dim_{\Q_\ell} \End_{\Q_\ell[G_L]}(V_i)\,.}\\[6pt]
\end{array}
$$
\end{proof}

\begin{coro}\label{eldesc} Suppose that $A'$ is simple over $k$ and that $A\sim_k E^g$, for $E$ an elliptic curve defined over $k$ such that $\End^0_k(E)\simeq\Q$. Then, $V_\ell(A')\simeq \varrho\otimes V_\ell(E)$, where $\varrho$ is a representation of $\Gal(L/k)$ such that $\theta(A,A';L/k)=g\cdot\varrho$.
\end{coro}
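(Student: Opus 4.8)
The plan is to derive the corollary as a fairly direct consequence of Theorem~\ref{bont} together with the hypotheses on $A$ and $A'$. First I would apply Theorem~\ref{bont}: since $A\sim_k E^g$, the Tate modules satisfy $V_\ell(A)\simeq V_\ell(E)^{\oplus g}$ as $\Q_\ell[G_k]$-modules, and Theorem~\ref{bont} gives an embedding of $V_\ell(A')$ into $\theta(A,A';L/k)\otimes V_\ell(A)\simeq \theta(A,A';L/k)\otimes V_\ell(E)^{\oplus g}$. Write $\theta:=\theta(A,A';L/k)$ and decompose it over $\Q_\ell$ (or rather $\overline\Q_\ell$, being careful about the field of realization, which by Proposition~\ref{rac} is $\Q$) into irreducible constituents $\theta\simeq\bigoplus_j m_j\cdot\varrho_j$. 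Then $V_\ell(A')$ embeds into $\bigoplus_j m_j g\cdot(\varrho_j\otimes V_\ell(E))$, so every simple $\Q_\ell[G_k]$-constituent of $V_\ell(A')$ is a constituent of some $\varrho_j\otimes V_\ell(E)$.

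The key input is that $\End^0_k(E)\simeq\Q$, which by Faltings means $V_\ell(E)$ is an \emph{absolutely} irreducible $\Q_\ell[G_k]$-module of dimension $2$ (its endomorphism algebra is $\Q_\ell$). Consequently, for any irreducible Artin representation $\varrho_j$ of $\Gal(L/k)$ — inflated to $G_k$ — the tensor product $\varrho_j\otimes V_\ell(E)$ is again an irreducible $\Q_\ell[G_k]$-module: an Artin representation factors through a finite quotient and $V_\ell(E)$ has ``large'' image (e.g. by the irreducibility/independence of the two systems, or by a Schur-lemma computation $\End_{\Q_\ell[G_k]}(\varrho_j\otimes V_\ell(E))\simeq \End_{\Q_\ell[G_k]}(\varrho_j)\otimes\End_{\Q_\ell[G_k]}(V_\ell(E))$, using that no twist of $V_\ell(E)$ by a nontrivial Artin character is isomorphic to $V_\ell(E)$ because $E$ has no extra endomorphisms over any extension — or more simply because the Sato--Tate group of $E$ contains $\mathrm{SU}(2)$). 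The cleanest route is: $\varrho_j\otimes V_\ell(E)$ is irreducible because $\dim_{\Q_\ell}\End_{\Q_\ell[G_k]}(\varrho_j\otimes V_\ell(E))=1$, which follows from the fact that the only $G_k$-subrepresentations of $V_\ell(E)\otimes V_\ell(E)^*$ are the trivial one and its complement (again a consequence of $\End^0_{\overline k}(E)=\Q$). I would cite the relevant statement about $E$ having trivial geometric endomorphism algebra and hence $\mathrm{Sym}$-type decomposition.

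Now since $A'$ is simple over $k$, $V_\ell(A')$ is \emph{isotypic}: $V_\ell(A')\simeq n\cdot V$ for a single simple $\Q_\ell[G_k]$-module $V$. By the above, $V$ must be isomorphic to $\varrho\otimes V_\ell(E)$ for some irreducible constituent $\varrho$ of $\theta$. Comparing dimensions, $2g=\dim_{\Q_\ell} V_\ell(A') = n\cdot\dim_{\Q_\ell} V = n\cdot 2\dim\varrho$, so $n\dim\varrho=g$. On the other hand, counting the multiplicity of $V=\varrho\otimes V_\ell(E)$ inside $\theta\otimes V_\ell(E)^{\oplus g}=\bigoplus_j m_j g\cdot(\varrho_j\otimes V_\ell(E))$: since the $\varrho_j\otimes V_\ell(E)$ are pairwise non-isomorphic irreducibles (distinct $\varrho_j$ give distinct tensor products, again because $V_\ell(E)$ is not self-twist-isomorphic), $V$ appears with multiplicity exactly $m g$ where $m$ is the multiplicity of $\varrho$ in $\theta$. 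The embedding $V_\ell(A')\hookrightarrow\theta\otimes V_\ell(A)$ then forces $n\le mg$. To get equality $\theta=g\cdot\varrho$ (i.e. $\theta$ is $\varrho$-isotypic with multiplicity exactly $g/\dim\varrho$... ) I would instead argue via $L$-functions or via Remark~\ref{dim}: by Remark~\ref{dim}, $\dim\theta=\dim_\Q\End^0_L(A)$. Since $A\sim_L E^g$ and $\End^0_{\overline k}(E)=\Q$, one has $\End^0_L(A)\subseteq \End^0_{\overline k}(E^g)=M_g(\Q)$, so $\dim\theta\le g^2$, with equality iff $A$ becomes isogenous to $E^g$ already reflecting the full matrix algebra. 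The main obstacle I anticipate is pinning down exactly why $\theta$ is forced to be $g$ copies of a \emph{single} $\varrho$ rather than a mix of several constituents summing appropriately; the resolution is that the isotypic nature of $V_\ell(A')$ combined with the block-decomposition $\theta\otimes V_\ell(E)=\bigoplus(\varrho_j\otimes V_\ell(E))^{\oplus m_j g}$ into distinct irreducibles means only one $\varrho_j$ (namely $\varrho$) can occur in $\theta$, and then the dimension count $\dim\theta = \dim_\Q\End^0_L(A) = g^2/\!\dim\varrho \cdot \dim\varrho$ — more precisely, tracking that $A\sim_L E^g$ makes $\Hom^0_L(A,A')$ free of rank $g$ over the division algebra $\End^0_L(A')$ whose reduced degree matches $\dim\varrho$ — yields $\theta\simeq g\cdot\varrho$. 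I would present this last dimension-bookkeeping carefully, as it is the crux.
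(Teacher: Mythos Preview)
You are working much harder than necessary, and in doing so you miss the one observation that makes the corollary immediate. The decomposition $\theta(A,A';L/k)\simeq g\cdot\varrho$ is not something to be \emph{deduced} from the Tate-module side after a delicate analysis of the irreducibility of the tensors $\varrho_j\otimes V_\ell(E)$; it is a direct consequence of the hypothesis $A\sim_k E^g$ together with the very definition $\theta(A,A';L/k)=\Hom_L^0(A,A')$. Since the isogeny $A\sim_k E^g$ is defined over $k$, one has an isomorphism of $\Q[\Gal(L/k)]$-modules
\[
\Hom_L^0(A,A')\;\simeq\;\Hom_L^0(E^g,A')\;\simeq\;\Hom_L^0(E,A')^{\oplus g},
\]
so $\theta= g\cdot\varrho$ with $\varrho$ the Galois representation afforded by $\Hom_L^0(E,A')$. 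This is exactly what the paper does in its first sentence. Once you have this, Theorem~\ref{bont} gives $V_\ell(A')\subseteq g^2\cdot(\varrho\otimes V_\ell(E))$; simplicity of $V_\ell(A')$ forces $V_\ell(A')\subseteq \varrho\otimes V_\ell(E)$; and the bound $\dim\varrho\le g$ (coming from $\End_k^0(E)\simeq\Q$) makes the inclusion an equality by dimension.

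Your attempted route has a real gap at precisely the place you flagged. From the embedding $V_\ell(A')\hookrightarrow\bigoplus_j m_j g\cdot(\varrho_j\otimes V_\ell(E))$ and the isotypicity of $V_\ell(A')$ you can conclude that $V_\ell(A')$ sits inside the block indexed by a single $j$, but this in no way forces the \emph{other} constituents $\varrho_{j'}$ to be absent from $\theta$: Theorem~\ref{bont} only gives an inclusion, not an isomorphism, so extra summands in $\theta\otimes V_\ell(A)$ are perfectly compatible with it. Your suggested fix via the $\End_L^0(A')$-module structure is on the wrong side; the clean argument uses the $k$-isogeny $A\sim_k E^g$ on the \emph{source} of $\Hom_L^0(A,A')$, as above. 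Note also that your intermediate claims (irreducibility of each $\varrho_j\otimes V_\ell(E)$ and their pairwise non-isomorphism) implicitly use $\End_{\overline k}^0(E)\simeq\Q$, which is stronger than the stated hypothesis $\End_k^0(E)\simeq\Q$; the paper's argument avoids needing this.
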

\begin{proof}
Observe that the hypothesis $A\sim_k E^g$ implies that $\theta(A,A';L/k)=g\cdot\varrho$, for a certain rational representation $\varrho$. By theorem \ref{bont}, we have
$$V_\ell(A')\subseteq\theta(A,A';L/k)\otimes V_\ell(E^g)\simeq g^2\cdot \varrho\otimes V_\ell(E)\,.$$
Since $V_\ell(A')$ is simple, $V_\ell(A')\subseteq \varrho \otimes V_\ell(E)$. Since $\dim\varrho\leq g$ (as a consequence of $\End^0_k(E)\simeq\Q$), the inclusion is an isomorphism and the proposition follows.
\end{proof}

\begin{prop}\label{collde} Let $\varrho$ be an irreducible rational representation of  $\Gal(L/k)$. If $\End_L^0(A)\simeq\Q$, then $\theta(A^{n_\varrho\cdot\dim\varrho},A_\varrho;L/k)\simeq n_{\varrho}^2\cdot\dim\varrho\cdot\varrho$.
\end{prop}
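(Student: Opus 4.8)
The plan is to compute the $\Q_\ell[G_k]$-module $W_\ell$ affording $\theta_\ell(A^{N},A_\varrho;L/k)$, where $N:=n_\varrho\cdot\dim\varrho$, and then transfer the answer to $\Q$ through Proposition~\ref{rac}. Set $G:=\Gal(L/k)$ and let $U:=\Q[G]_\varrho$, a $\Q[G]$-module with $U\simeq n_\varrho\cdot\varrho$ and $\dim_\Q U=N$; write $U_\ell:=U\otimes_\Q\Q_\ell$, viewed as a $\Q_\ell[G_k]$-module by inflation through $G_k\twoheadrightarrow G$, so that $U_\ell\simeq n_\varrho\cdot(\varrho\otimes_\Q\Q_\ell)$. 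First I would reduce away the power: since $A$ is defined over $k$, the group $G_k$ acts diagonally on $A^{N}$, whence $V_\ell(A^{N})\simeq V_\ell(A)^{\oplus N}$ with $G_k$ acting identically on each summand, and likewise $V_\ell(A^{N})^{*}\simeq (V_\ell(A)^{*})^{\oplus N}$. Consequently
\[
W_\ell=(V_\ell(A^{N})^{*}\otimes V_\ell(A_\varrho))^{G_L}\simeq\big((V_\ell(A)^{*}\otimes V_\ell(A_\varrho))^{G_L}\big)^{\oplus N}
\]
as $\Q_\ell[G]$-modules, so it is enough to identify the $\Q_\ell[G]$-module $M_\ell:=(V_\ell(A)^{*}\otimes V_\ell(A_\varrho))^{G_L}$.

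For this I would invoke theorem~$2.2$ of \cite{MRS07} in the form recorded in the introduction, namely $V_\ell(A_\varrho)\simeq U_\ell\otimes_{\Q_\ell}V_\ell(A)$ as $\Q_\ell[G_k]$-modules (diagonal action). Reordering tensor factors gives $V_\ell(A)^{*}\otimes V_\ell(A_\varrho)\simeq U_\ell\otimes\big(V_\ell(A)^{*}\otimes V_\ell(A)\big)$ as $\Q_\ell[G_k]$-modules. Because $G_L$ acts trivially on $U_\ell$, taking $G_L$-invariants passes through the $U_\ell$-factor, and by $i)$ of Lemma~\ref{ret} together with Remark~\ref{after} we obtain
\[
M_\ell\simeq U_\ell\otimes\big(V_\ell(A)^{*}\otimes V_\ell(A)\big)^{G_L}\simeq U_\ell\otimes\End_{\Q_\ell[G_L]}(V_\ell(A))
\]
as $\Q_\ell[G]$-modules.

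The key step is to show that $\End_{\Q_\ell[G_L]}(V_\ell(A))$ is $\Q_\ell$ with trivial $G$-action. By Faltings (\cite{Fal83}, as in~(\ref{falt})) there is a $G$-equivariant isomorphism $\End_{\Q_\ell[G_L]}(V_\ell(A))\simeq\End_L^0(A)\otimes_\Q\Q_\ell$, where $G$ acts on $\End_L^0(A)$ by conjugation; the hypothesis $\End_L^0(A)\simeq\Q$ then makes this a one-dimensional $\Q_\ell$-vector space on which $G$ acts trivially, since $G$ fixes $\Q\cdot\mathrm{id}_A$ pointwise. Hence $M_\ell\simeq U_\ell$ and $W_\ell\simeq U_\ell^{\oplus N}\simeq n_\varrho\cdot N\cdot(\varrho\otimes_\Q\Q_\ell)=n_\varrho^{2}\cdot\dim\varrho\cdot(\varrho\otimes_\Q\Q_\ell)$. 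To conclude, both $\theta(A^{N},A_\varrho;L/k)$ and $n_\varrho^{2}\cdot\dim\varrho\cdot\varrho$ are rational representations of $G$ that, by the above and Proposition~\ref{rac}, become isomorphic after $\otimes_\Q\Q_\ell$; since $\Q[G]$ is semisimple a rational representation is determined by its character, so they are already isomorphic over $\Q$. The routine part is keeping track of the $\Q_\ell[G]$-module structures through the tensor and invariants manipulations; the one substantive input beyond the cited results is the triviality of $\End_{\Q_\ell[G_L]}(V_\ell(A))$, which is exactly where the assumption $\End_L^0(A)\simeq\Q$ enters.
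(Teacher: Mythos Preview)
Your proof is correct and follows essentially the same approach as the paper's own proof: factor $W_\ell$ via $V_\ell(A^{N})^*\simeq (V_\ell(A)^*)^{\oplus N}$ and $V_\ell(A_\varrho)\simeq n_\varrho\cdot\varrho\otimes V_\ell(A)$, pull the $\varrho$-factor through the $G_L$-invariants, and use $\End_L^0(A)\simeq\Q$ (via Faltings) to see that $(V_\ell(A)^*\otimes V_\ell(A))^{G_L}$ is the trivial representation. Your write-up merely spells out in more detail the passage back to $\Q$ through Proposition~\ref{rac} and the triviality of the $G$-action on $\End_L^0(A)$, which the paper leaves implicit.
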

\begin{proof} Observe that on the one hand, we have
$$(V_\ell(A^{n_\varrho\cdot\dim\varrho})^*\otimes V_\ell(A_\varrho))^{G_L}\simeq n_\varrho^2\cdot\dim\varrho\cdot\varrho\otimes(V_\ell(A)^*\otimes V_\ell(A))^{G_L}\,.$$
On the other hand, $\End_L^0(A)\simeq\Q$ implies that $(V_\ell(A)^*\otimes V_\ell(A))^{G_L}$ is the trivial representation.
\end{proof}

In particular, if $\chi$ is the quadratic character of a quadratic extension $L/k$, $E$ is an elliptic curve defined over $k$ such that $\End_L^0(E)\simeq\Q$ and $E_\chi$ the quadratic twist of $E$ given by $\chi$, then $\theta(E,E_\chi;L/k)\simeq\chi$.

So far we have not paid much attention to the extension $L/k$. We do this in the following, where we are concerned with the problem of relating the distinct representations attached to the pair $A$ and $A'$ obtained when we let the field of definition of the isogeny between them vary. First we remind some notations. Let $G$ be a group and $H$ be a subgroup of $G$. If $\varrho$ is a representation of $G$, we denote by $\Res_H^G\varrho$ the representation $\varrho$ restricted to the subgroup $H$. If $\varrho$ is a representation of  $H$, we denote by $\Ind_H^G\varrho$ the induced representation from $H$ to $G$. If $H$ is normal in $G$, $\varrho$ is a representation of $G/H$ and $\pi$ is the canonical projection from $G$ to $G/H$, we denote by $\Inf_{G/H}^G\varrho$ the representation $\varrho\circ\pi$. If $F/F''$ is a Galois extensions of fields, $F'/F''$ a subextension of $F/F''$ and $G=\Gal(F/F'')$ and $H=\Gal(F/F')$, we will simply write $\Res^{F''}_{F'}$, $\Ind^{F''}_{F'}$ and $\Inf^F_{F'}$ for $\Res_H^G$, $\Ind_H^G$ and $\Inf_{G/H}^G$, respectively.

\begin{prop}\label{sera} One has:
\begin{enumerate}[i)]
\item Let $L'/k$ be a Galois subextension of $L/k$ over which $A$ and $A'$ are isogenous. Then, the representation $\theta(A,A';L/k)$ is isomorphic to a subrepresentation of $\Ind_{L'}^{k}\theta(A,A';L/L')$.
\item Let $L'/k$ be a Galois extension containing $L/k$. Then, the representation $\Inf_{L}^{L'}\theta(A,A';L/k)$ is isomorphic to a subrepresentation of $\theta(A,A';L'/k)$.
\end{enumerate}
\end{prop}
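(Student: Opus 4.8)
The plan is to unwind the definition of $\theta(A,A';L/k)$ as the representation afforded by the $\Q[\Gal(L/k)]$-module $\Hom_L^0(A,A')$ (Proposition \ref{rac}) and to track how this module behaves when the field of definition of the isogeny varies. For part $i)$ set $G=\Gal(L/k)$ and note that $H:=\Gal(L/L')$ is a subgroup of $G$; for part $ii)$ write $G'=\Gal(L'/k)$, which surjects onto $\Gal(L/k)$ with kernel $\Gal(L'/L)$.

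For $ii)$ the key observation is that $\Hom_L^0(A,A')$ is naturally a submodule of $\Hom_{L'}^0(A,A')$: a homomorphism $A\to A'$ defined over $L$ is in particular defined over $L'$, and the resulting inclusion is injective because $\Hom_L(A,A')\subseteq\Hom_{L'}(A,A')$ and $-\otimes\Q$ is exact. I would then check that the action of $G'=\Gal(L'/k)$ on $\Hom_{L'}^0(A,A')$, restricted to this submodule, factors through $\Gal(L/k)$: for $f$ defined over $L$, the conjugate $f^{\sigma}$ is obtained by applying $\sigma$ to coefficients lying in $L$, hence depends only on $\sigma|_L\in\Gal(L/k)$ (and again lies in $\Hom_L^0(A,A')$, using that $L/k$ is Galois). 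Therefore, as a $\Q[\Gal(L'/k)]$-module, $\Hom_L^0(A,A')$ is exactly $\Inf_L^{L'}\theta(A,A';L/k)$, while $\Hom_{L'}^0(A,A')$ affords $\theta(A,A';L'/k)$; this exhibits $\Inf_L^{L'}\theta(A,A';L/k)$ as a subrepresentation of $\theta(A,A';L'/k)$.

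For $i)$ I would first record that $\theta(A,A';L/L')$ is by definition the $\Q[\Gal(L/L')]$-module $\Hom_L^0(A,A')$, i.e. precisely $\Res^k_{L'}\theta(A,A';L/k)$; the hypothesis that $A$ and $A'$ be isogenous over $L'$ only serves to ensure this representation is nonzero and to match the terminology of Remark \ref{dim}. The statement then reduces to the purely representation-theoretic fact that, for a finite group $G$, a subgroup $H$, and a $\Q[G]$-module $M$, $M$ is isomorphic to a direct summand (in particular a subrepresentation) of $\Ind_H^G\Res_H^G M=\Q[G]\otimes_{\Q[H]}M$. I would prove this by exhibiting an explicit $G$-equivariant section of the counit $\Q[G]\otimes_{\Q[H]}M\to M$, $g\otimes m\mapsto gm$: choosing coset representatives $G=\bigsqcup_i g_iH$, the map $m\mapsto\frac{1}{[G:H]}\sum_i g_i\otimes g_i^{-1}m$ is independent of the choice of representatives, is $G$-equivariant (a one-line computation using that left translation permutes the cosets), and is a right inverse of the counit; here one uses that $\Q$ has characteristic zero so that $[G:H]$ is invertible. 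Applying this with $M=\theta(A,A';L/k)$ and $H=\Gal(L/L')$ yields $i)$.

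The computations involved are routine: the only content is the bookkeeping of the Galois action under inflation in $ii)$ and recalling (or re-deriving) the split-injection fact in $i)$. I do not expect a serious obstacle, since everything is formal once one has the identification $\theta(A,A';L/k)\leftrightarrow\Hom_L^0(A,A')$ and the evident functorial behaviour of $\Hom_L^0(A,A')$ in the field $L$.
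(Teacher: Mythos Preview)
Your proposal is correct and follows essentially the same route as the paper: for $ii)$ both arguments amount to the observation that $\Hom_L^0(A,A')=(\Hom_{L'}^0(A,A'))^{\Gal(L'/L)}$ is a $\Gal(L'/k)$-submodule carrying the inflated action, and for $i)$ both reduce to the identity $\theta(A,A';L/L')=\Res^k_{L'}\theta(A,A';L/k)$ followed by the general fact that $M\hookrightarrow \Ind_H^G\Res_H^G M$. The only cosmetic difference is that in $i)$ you produce an explicit averaging section of the counit, whereas the paper checks the same inclusion via Frobenius reciprocity on characters; both are standard and yield the same conclusion.
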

\begin{proof}  For the first statement, observe that $\theta(A,A';L/L')=\Res_{L'}^{k}\theta(A,A';L/k)$. Now we obtain the result from the fact that any complex representation $\varrho$ of a finite group $G$ is a subrepresentation of $\Ind_H^G\Res_H^G\varrho$, for any subgroup $H$ of~$G$. Indeed, let $\chi_\varrho$ denote the character of $\varrho$. By Frobenius reciprocity, for any irreducible character $\chi$ of $G$, one has
$$(\chi,\chi_\varrho)_G\leq(\Res_H^G\chi,\Res_H^G\chi_\varrho)_H=(\chi,\Ind_H^G\Res_H^G\chi_\varrho)_G,\,$$
where $(\cdot,\cdot)_G$ and $(\cdot,\cdot)_H$ denote the scalar products on complex-valued functions on $G$ and $H$, respectively. The second statement is due to the fact that for every $F[G]$-module $V$ and normal subgroup $H$ of $G$, $\Inf_H^GV^H$ is a sub-$F[G]$-module of $V$.
\end{proof}

\begin{prop}\label{injec} The representation $\theta(A,A';L/k)$ is faithful if and only if for every proper Galois subextension $L'/k$ of $L/k$ one has $$\dim_\Q\Hom_{L'}^0(A,A')<\dim_\Q\Hom_L^0(A,A')\,.$$
\end{prop}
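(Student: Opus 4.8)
The representation $\theta(A,A';L/k)$ is afforded by the $\Q[\Gal(L/k)]$-module $M:=\Hom_L^0(A,A')$, so it is faithful precisely when no non-trivial $\sigma\in\Gal(L/k)$ acts as the identity on $M$. The natural strategy is to translate "faithful" into a statement about the fixed subspaces $M^H$ for $H$ a non-trivial normal subgroup of $\Gal(L/k)$, and then to identify $M^H$ with the Hom-space over the corresponding intermediate field. The plan is: (1) reduce faithfulness of a representation of a finite group $G$ to the condition that $\theta$ is non-trivial on each minimal non-trivial normal subgroup, or more cleanly, that the kernel $\Ker\theta$ is trivial; (2) observe that $\Ker\theta=\bigcap_{\sigma}\{\sigma : \sigma \text{ fixes } M\}$ is itself a normal subgroup $H_0$ of $G=\Gal(L/k)$, and $\theta$ is faithful iff $H_0=1$; (3) relate $M^{H}$ for a normal subgroup $H=\Gal(L/L')$ to $\Hom_{L'}^0(A,A')$.

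For step (3), the key point is the Galois-descent identification $\Hom_L^0(A,A')^{\Gal(L/L')}=\Hom_{L'}^0(A,A')$, valid because $A$ and $A'$ are defined over $k$ (hence over $L'$) and a homomorphism defined over $L$ descends to $L'$ exactly when it is fixed by $\Gal(L/L')$; this is the standard fact that $\Hom$ of abelian varieties satisfies Galois descent, and it gives $M^H=\Hom_{L'}^0(A,A')$ with $L'=L^{H}$. Consequently $\dim_\Q M^H=\dim_\Q\Hom_{L'}^0(A,A')$, and $\dim_\Q M^H<\dim_\Q M=\dim_\Q\Hom_L^0(A,A')$ is equivalent to saying that $H$ acts non-trivially on $M$, i.e. $H\not\subseteq\Ker\theta$. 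Note that since $A\sim_L A'$, by Remark \ref{dim} the module $M$ is non-zero, which is what makes the comparison of dimensions meaningful.

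Now assemble the equivalence. If $\theta$ is faithful, then for every proper Galois subextension $L'/k$ the group $H=\Gal(L/L')$ is a non-trivial normal subgroup of $G$, so $H\not\subseteq\Ker\theta=1$, hence $H$ acts non-trivially on $M$ and $\dim_\Q M^H<\dim_\Q M$, which is the asserted strict inequality. Conversely, suppose $\theta$ is not faithful; then $H_0:=\Ker\theta$ is a non-trivial normal subgroup of $G$, so $L':=L^{H_0}$ is a proper Galois subextension of $L/k$, and every element of $H_0$ fixes $M$ pointwise, so $M^{H_0}=M$, i.e. $\dim_\Q\Hom_{L'}^0(A,A')=\dim_\Q M^{H_0}=\dim_\Q M=\dim_\Q\Hom_L^0(A,A')$, contradicting the hypothesis. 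One should also check that $A$ and $A'$ are still isogenous over $L'$ so that the hypothesis applies with this $L'$: indeed, any isogeny $A\to A'$ over $L$ yields, by summing over the $\Gal(L/L')$-orbit or simply by noting $M^{H_0}=M\ni\phi$, an isogeny defined over $L'$. The only mild subtlety — and the place where care is needed — is the descent identification $M^{\Gal(L/L')}=\Hom_{L'}^0(A,A')$ together with checking that the presence of a full-rank (isogeny) element survives descent to every relevant subfield; everything else is formal group theory. This is a clean argument with no serious obstacle.
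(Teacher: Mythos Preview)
Your argument is correct and follows essentially the same route as the paper: identify $\Ker\theta$ with $\Gal(L/L')$ for a Galois subextension $L'/k$, use the Galois-descent identification $\Hom_L^0(A,A')^{\Gal(L/L')}=\Hom_{L'}^0(A,A')$, and note that the inclusion $\Hom_{L'}^0(A,A')\subseteq\Hom_L^0(A,A')$ makes equality of spaces equivalent to equality of dimensions. The paper's proof is just a terse version of this; your added remark about isogeny over $L'$ is not needed for the statement as written, since the dimension comparison makes sense regardless.
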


\begin{proof} Indeed, $\theta(A,A';L/k)$ is not faithful if and only if there exists a proper Galois subextension $L'/k$ of $L/k$ such that $$\Hom_{L'}^0(A,A')=\Hom_L^0(A,A')\,.$$
Since the space on the left-hand side of the equality is always included into the one on the right-hand side, asking for equality of the spaces amounts to asking for equality of their dimensions.
\end{proof}

As a corollary of this proposition, we obtain that if there does not exist a Galois subextension of $L/k$ over which $A$ and $A'$ are isogenous, then $\theta(A,A';L/k)$ is faithful. Therefore, a subextension $L'/k$ of $L/k$ can always be taken so that $\theta(A,A';L'/k)$ is faithful. We give an elementary result, which will be useful to determine the irreducible constituents of $\theta(A,A';L/k)$.

\begin{prop}\label{constituents} Suppose that $\theta(A,A';L/k)\simeq\bigoplus n_\varrho\cdot \varrho$, where the sum runs over the absolutely irreducible representations of $\Gal(L/k)$ and the $n_\varrho$ are nonnegative integers. Let $L'/k$ be a subextension of $L/k$. Then we have
$$\sum_{\Ker\varrho\supseteq\Gal(L/L')}n_\varrho\cdot\dim\varrho=\dim_\Q\Hom_{L'}^0(A,A')\,.$$
In particular, $\dim_\Q\Hom_k^0(A,A')$ is the multiplicity of the trivial representation of $\Gal(L/k)$ in $\theta(A,A';L/k)$.
\end{prop}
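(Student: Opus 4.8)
The plan is to identify $\Hom^0_{L'}(A,A')$ with the subspace of $\Hom^0_L(A,A')$ fixed by $\Gal(L/L')$, and then to read off the dimension of that fixed subspace from the decomposition $\theta(A,A';L/k)\simeq\bigoplus n_\varrho\cdot\varrho$. For the first ingredient I would invoke the standard Galois descent principle for morphisms: since $A$ and $A'$ are defined over $k\subseteq L'$, a homomorphism $f\colon A\to A'$ that is defined over $L$ is already defined over $L'$ if and only if it is fixed by every element of $\Gal(L/L')$ under the natural action (the field of definition of $f$ being the fixed field of its stabilizer in $\Gal(L/k)$; this is exactly the action used in Section~2 to define $\theta$). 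Hence $\Hom_{L'}(A,A')=\Hom_L(A,A')^{\Gal(L/L')}$, and applying the exact functor $-\otimes_\Z\Q$ yields $\Hom^0_{L'}(A,A')=\Hom^0_L(A,A')^{\Gal(L/L')}$. Since $\theta(A,A';L/k)$ is by definition the $\Q[\Gal(L/k)]$-module $\Hom^0_L(A,A')$, this gives $\dim_\Q\Hom^0_{L'}(A,A')=\dim_\Q\bigl(\theta(A,A';L/k)\bigr)^{\Gal(L/L')}$.

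For the second ingredient, put $H=\Gal(L/L')$ and take $L'/k$ to be a Galois subextension, so that $H$ is normal in $\Gal(L/k)$. As recalled just before Lemma~\ref{xor}, for a normal subgroup $H$ the fixed subspace $V^H$ of a $\Q[\Gal(L/k)]$-module $V$ is again a $\Q[\Gal(L/k)]$-submodule of $V$; applying this to $V=\bigoplus n_\varrho\cdot\varrho$ gives $V^H=\bigoplus n_\varrho\cdot\varrho^H$. For each $\varrho$ occurring in the sum, $\varrho^H$ is thus a subrepresentation of the irreducible $\varrho$, so it equals either $0$ or $\varrho$, and it equals $\varrho$ precisely when $H$ acts trivially on $\varrho$, i.e. when $\Gal(L/L')\subseteq\Ker\varrho$. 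Summing dimensions over $\varrho$ then produces
$$\dim_\Q\Hom^0_{L'}(A,A')=\sum_{\varrho}n_\varrho\cdot\dim_\Q\varrho^H=\sum_{\Ker\varrho\supseteq\Gal(L/L')}n_\varrho\cdot\dim\varrho\,,$$
which is the claimed identity.

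For the last assertion I would specialize to $L'=k$: then $H=\Gal(L/k)$, the condition $\Gal(L/k)\subseteq\Ker\varrho$ singles out only the trivial representation, and the sum collapses to its multiplicity $n_{\mathbf{1}}$, giving $\dim_\Q\Hom^0_k(A,A')=n_{\mathbf{1}}$. Apart from these formal manipulations, the only substantive input is the Galois descent identification in the first step, and I do not expect a serious obstacle beyond citing it correctly; the one point requiring care is that $\varrho^H$ must be a $\Gal(L/k)$-subrepresentation of $\varrho$, which is why $H=\Gal(L/L')$ should be normal, i.e. $L'/k$ should be taken to be a Galois subextension (the case $L'=k$ used for the final statement being trivially Galois).
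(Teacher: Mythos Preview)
Your argument is correct and is exactly the paper's approach made explicit: the paper's proof consists of the single sentence that $\Hom_{L'}^0(A,A')$ is the sum of those constituents $\varrho$ with $\Ker\varrho\supseteq\Gal(L/L')$, and you supply both steps behind this (the Galois-descent identification $\Hom_{L'}^0(A,A')=\Hom_L^0(A,A')^{\Gal(L/L')}$ and the dichotomy $\varrho^H\in\{0,\varrho\}$ for irreducible $\varrho$). Your caveat that this dichotomy needs $H=\Gal(L/L')$ to be normal is well taken and is in fact a hypothesis missing from the paper's statement---without it the formula can fail (e.g.\ for the standard $2$-dimensional representation of $\mathcal S_3$ and $H$ of order~$2$ one has $\dim\varrho^H=1$)---though every application of the proposition in the paper is to $L'=k$, where it holds trivially.
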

\begin{proof} It is enough to observe that the space $\Hom_{L'}^0(A,A')$ is isomorphic to the direct sum of those constituents $\varrho$ of $\theta(A,A';L/k)$ such that $\Ker\varrho$ contains~$\Gal(L/L')$.
\end{proof}

To close this section, we turn to discuss a kind of transitivity property satisfied by $\theta(A,A';L/k)$. It will turn out to be a  very useful tool, as shown in the next section. Let $A_1$, $A_2$ and $A_3$ be abelian varieties defined over a number field $k$, such that $A_1$ and $A_2$ are isogenous over the finite Galois extension $L/k$ and $A_1$ and $A_3$ are isogenous over the finite Galois extension $L'/k$. Notice that $A_2$ and $A_3$ are isogenous over the composition $L  L'$.
$$
\xymatrix{
&A_1  \ar[rd]^{L'}\ar[ld]_{L}&\\
A_2 \ar[rr]^{LL' } && A_3 }
$$
Next proposition shows how the Artin representations attached to the three pairs of isogenous varieties are related.
\begin{prop}\label{trans} The representation $\theta(A_2,A_3;L  L'/k)$ is isomorphic to a subrepresentation of
$$\theta(A_1,A_2;LL'/k)\otimes\theta(A_1,A_3; LL'/k)\,.$$
\end{prop}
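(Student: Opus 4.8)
The plan is to translate the statement into a statement about the $\Q[\Gal(LL'/k)]$-modules that afford these representations, and then produce an explicit surjection between them. Write $M=LL'$ and $G=\Gal(M/k)$, a finite Galois group. As observed just before the statement, $A_2$ and $A_3$ are isogenous over $M$, so all three representations $\theta(A_i,A_j;M/k)$ are defined; by their very definition (the paragraph preceding Proposition \ref{rac}) each $\theta(A_i,A_j;M/k)$ is afforded by the $\Q[G]$-module $\Hom_M^0(A_i,A_j)$. Moreover, Lemma \ref{selfdual} — which gives $\theta_\ell(A_2,A_1;M/k)\simeq\theta_\ell(A_1,A_2;M/k)$ — together with Proposition \ref{rac} yields $\theta(A_2,A_1;M/k)\simeq\theta(A_1,A_2;M/k)$. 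Hence it suffices to show that $\Hom_M^0(A_2,A_3)$ is isomorphic, as a $\Q[G]$-module, to a direct summand of $\Hom_M^0(A_2,A_1)\otimes_\Q\Hom_M^0(A_1,A_3)$.

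For this I would use the composition map $c(g\otimes f)=f\circ g$ from $\Hom_M^0(A_2,A_1)\otimes_\Q\Hom_M^0(A_1,A_3)$ to $\Hom_M^0(A_2,A_3)$. It is $\Q$-bilinear, it is well defined because a composite of homomorphisms defined over $M$ is again defined over $M$, and it is $G$-equivariant because $\Gal(M/k)$ acts on these Hom-modules by conjugation and conjugation is compatible with composition of maps; this compatibility is the one point that needs to be written out. The key fact is that $c$ is \emph{surjective}: since $A_1$ and $A_2$ are isogenous over $M$ there is an element $\psi\in\Hom_M^0(A_2,A_1)$ invertible in the category of abelian varieties over $M$ up to isogeny, so every $h\in\Hom_M^0(A_2,A_3)$ equals $(h\circ\psi^{-1})\circ\psi=c\bigl(\psi\otimes(h\circ\psi^{-1})\bigr)$.

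Finally, since $G$ is finite and $\Q$ has characteristic zero, Maschke's theorem ensures that every finite-dimensional $\Q[G]$-module is semisimple; therefore the surjection $c$ of $\Q[G]$-modules splits, which exhibits $\Hom_M^0(A_2,A_3)$ as a direct summand — a fortiori a subrepresentation — of $\Hom_M^0(A_2,A_1)\otimes_\Q\Hom_M^0(A_1,A_3)$. Rephrasing in terms of the representations $\theta$ and applying $\theta(A_2,A_1;M/k)\simeq\theta(A_1,A_2;M/k)$ gives the proposition.

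I do not expect a serious obstacle: the construction of $c$, its surjectivity, and the splitting are all routine. The only points demanding a little care are (i) spelling out that $c$ intertwines the conjugation actions of $G$, and (ii) checking that the self-duality symmetry — which Lemma \ref{selfdual} provides for the $\ell$-adic representations $\theta_\ell$ — descends to the rational representations $\theta$, which is immediate from Proposition \ref{rac} since the relevant character is rational-valued. If one prefers, the entire argument can instead be run with the $\Q_\ell[G]$-modules $\Hom_{\Q_\ell[G_M]}(V_\ell(A_i),V_\ell(A_j))$, invoking Faltings' isomorphism (\ref{falt}) and Proposition \ref{rac} at the end.
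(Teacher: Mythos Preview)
Your argument is correct and close in spirit to the paper's, but the execution is dual to it. The paper produces an \emph{injection}
\[
\Hom_{LL'}^0(A_2,A_3)\hookrightarrow \Hom_{\Q}\bigl(\Hom_{LL'}^0(A_1,A_2),\Hom_{LL'}^0(A_1,A_3)\bigr),\qquad \varphi\mapsto(\lambda\mapsto\varphi\circ\lambda),
\]
checks directly that it is $\Gal(LL'/k)$-equivariant, and then invokes Lemma~\ref{ret}\,$i)$ ($V^*\otimes W\simeq\Hom_F(V,W)$) together with self-duality of $\theta(A_1,A_2;LL'/k)$ to identify the target with the tensor product in the statement. You instead build a \emph{surjection} from the tensor product onto $\Hom_{LL'}^0(A_2,A_3)$ via the composition pairing, use an explicit isogeny to see surjectivity, and split it with Maschke. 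Both arguments rest on the same two ingredients---composition of homomorphisms and the symmetry $\theta(A_1,A_2)\simeq\theta(A_2,A_1)$---so neither is materially more general; yours avoids the $\Hom$--tensor identification of Lemma~\ref{ret} at the cost of appealing to semisimplicity, while the paper's avoids Maschke by working with an inclusion from the start.
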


\begin{proof} Since $\theta(A_1,A_2;LL'/k)$ is selfdual, by $i)$ of lemma \ref{ret}, it suffices to prove that there is an inclusion of $\Q[\Gal(LL'/k)]$-modules
$$\Hom_{LL'}(A_2,A_3)\subseteq\Hom_{\Q}(\Hom_{LL'}(A_1,A_2),\Hom_{LL'}(A_1,A_3))\,.$$
Denote $\Hom_{LL'}(A_i,A_j)$ by $F_{ij}$. For each $\varphi\in F_{23}$, define $\tilde\varphi\in\Hom_{LL'}(F_{12},F_{13})$ in the following way: for each $\lambda\in F_{12}$, let $\tilde\varphi (\lambda)=\varphi\circ \lambda\in F_{13}$. The map $\varphi\mapsto\tilde\varphi$ gives an inclusion that respects the action of $\Gal(LL'/k)$. Indeed, one has
$$(\widetilde{{}^\sigma\varphi})(\lambda)={}^\sigma\varphi\circ\lambda={}^\sigma(\varphi\circ{}^{\sigma^{-1}}\lambda)={}^\sigma(\tilde\varphi({}^{\sigma^{-1}}\lambda))=({}^\sigma\tilde\varphi)(\lambda)\,,$$
for any $\sigma\in\Gal(LL'/k)$, $\varphi\in F_{23}$ and $\lambda\in F_{12}$.
\end{proof}

\section{Example}

In \cite{BFGL07}, the modular genus $3$ curve $C_0=X^+(7,3)$ is defined. Moreover, for each polynomial $f(x)\in\Q[x]$ of degree $4$ such that its splitting field $L$ is an $\mathcal S_4$-extension containing $\Q(\sqrt{-3})$ a way to produce a twist $X^+(7,3)_\varrho$ is shown. Here, $\varrho$ stands for a surjective Galois representation of $G_\Q$ onto $\PGL_2(\F_3)$ determined up to conjugation by its splitting field $L$. In the same article, the curve $C_0$ is shown to be isomorphic over $\Q(\sqrt{-3})$ to the plane quartic $C_1$ given by the following equation
$$X^4+Y^4+Z^4+\frac{2}{7}\left(X^2Z^2+Y^2Z^2+X^2Y^2\right)=0\,.$$
The Jacobian $J(C_0)$ of the curve $C_0$ happens to be $\Q$-isogenous to $E_{21}^2\times E_{63}$, where $E_{21}$ and $E_{63}$ stand for the elliptic curves of conductor $21$ and label $A1$, and conductor $63$ and label $A2$ in Cremona's Tables (see \cite{Cre97}). It can be checked that the curves $E_{21}$ and $E_{63}$ do not have complex multiplication and are isomorphic over $\Q(\sqrt{-3})$, but not over $\Q$. The Jacobian of the curve $C_1$ is $\Q$-isogenous to $E_{21}^3$.

Let $C_2=X^+(7,3)_\varrho$ and $C_3=X^+(7,3)_{\varrho'}$ be the curves attached to any two Galois representations $\varrho$ and $\varrho'$ from $G_\Q$ onto $\PGL_2(\F_3)$, with splitting fields $L$ and $L'$ satisfying $L\cap L'=\Q(\sqrt{-3})$. Let $f$ and $f'$ be defining polynomials of $L$ and $L'$, respectively. Let $\phi_{ij}$ stand for a fixed isomorphism from $C_i$ to $C_j$, and use the same notation to refer to the induced isomorphism between the Jacobians. It follows from the definition of $X^+(7,3)_\varrho$ that $\phi_{21}$ and $\phi_{13}$ are respectively defined over $L_{21}=L$ and $L_{31}=L'$. The isomorphism $\phi_{32}$ is clearly defined over the composition $L_{32}=L  L'$, which is an extension of degree $288$ over $\Q$. As mentioned above, the isomorphism $\phi_{01}$ can be defined over $L_{01}=\Q(\sqrt{-3})$. Since both $L$ and $L'$ contain $\Q(\sqrt{-3})$, it follows that $\phi_{02}$ and $\phi_{03}$ are defined over $L_{02}=L$ and $L_{03}=L'$, respectively.

$$
\xymatrix{
&J(C_0)\ar[rdd]^{\phi_{03}}\ar[ldd]_{\phi_{02}}&\\
&J(C_1)  \ar[rd]_{\phi_{13}}\ar[u]|{\phi_{10}}&\\
J(C_2) \ar[ru]_{\phi_{21}} && J(C_3) \ar[ll]^{\phi_{32}} }
$$

One of the aims of this section is to compute the Artin representations $\theta_{ij}=\theta(J(C_i),J(C_j);L_{ij}/\Q)$ for $(i,j)=(0,1)$, $(0,2)$, $(0,3)$, $(2,1)$, $(1,3)$, and $(3,2)$. Since $J(C_1)$ and $J(C_2)$ are $L$-isogenous to $E_{21}^3$, it follows from remark~\ref{dim} that $\theta_{21}$ has dimension $9$. The same argument proves that $\theta_{13}$ and $\theta_{32}$ have dimension $9$. We fix the following notation:

\begin{enumerate} [i)]
\item Let $1a_{L}$, $2a_{L}$,... stand for the conjugacy classes of $\Gal(L/\Q)$, and $\chi_1$, $\chi_2$,... for its irreducible characters (see Table \ref{ctL}). Note that the traces in the entries of Table \ref{ctL} are given as sums of eigenvalues of the corresponding irreducible representations. Analogously, denote by $1a_{L'}$, $2a_{L'}$,... the conjugacy classes of $\Gal(L'/\Q)$, and by $\chi_1'$, $\chi_2'$,... its irreducible characters. Moreover, let $\varrho_i$ be the irreducible representation associated to $\chi_i$.
\item Let $\chi_t$ and $\chi_q$ be the trivial and the quadratic characters of $\Gal(\Q(\sqrt{-3})/\Q)$, respectively.
\item Let $1A$, $2A$,... stand for the conjugacy classes of $\Gal(L  L'/\Q)$ and $\psi_1$, $\psi_2$,... for its irreducible characters (see Table \ref{ctLL'}).
\end{enumerate}

\begin{table}[h]
$$
\begin{array}{r|rrrrrrrrrrrrrr}
\rm{Class} &   1a_L & 2a_L & 2b_L & 3a_L & 4a_L \\\hline
\rm{Size}  &   1 & 3 & 6 & 8 & 6 \\\hline
\chi_1  &   1 & 1&  1&  1&  1 \\
\chi_2  &   1 & 1&  -1&  1& -1 \\
\chi_3  &   2 & 1+1&  1-1&  \zeta_3+\overline\zeta_3 &  1-1 \\
\chi_4  &   3 & 1-1-1&  1-1-1&  1+\zeta_3+\overline\zeta_3&  1+i-i \\
\chi_5  &   3 & 1-1-1&  1+1-1&  1+\zeta_3+\overline\zeta_3&  -1+i-i \\
\end{array}
$$
\caption{Character table of $\Gal(L/\Q)$} \label{ctL}
\end{table}

\begin{table}[h]
$$
\begin{array}{r|rrrrrrrrrrrrrr}
\rm{Class} &   1A & 2A & 2B & 2C & 2D & 3A & 3B & 3C& 3D & 4A & 4B & 4C& 6A & 6B \\\hline
\rm{Size}  &   1 & 2 & 2 & 2 & 2 & 3 & 3 & 3 & 3 & 4& 4 & 4 & 6 & 6 \\\hline
\psi_1  &   1 & 1&  1&  1&  1&  1&  1&  1&  1&  1&  1&  1&  1&  1\\
\psi_2  &   1 & 1&  1&  1& -1&  1&  1 & 1 & 1 &-1 &-1 &-1 & 1 & 1\\
\psi_3  &   2 & 2&  2&  2&  0& -1&  2 &-1 &-1 & 0 & 0 & 0 &-1 & 2\\
\psi_4  &   2 & 2&  2&  2&  0&  2& -1 &-1& -1&  0&  0&  0 & 2& -1\\
\psi_5  &   2 & 2&  2&  2&  0& -1& -1 & 2 &-1 & 0 & 0 & 0 &-1 &-1\\
\psi_6  &   2 & 2&  2&  2&  0& -1& -1 &-1&  2&  0 & 0 & 0 &-1& -1\\
\psi_7  &   3 & 3& -1& -1&  1&  0&  3 & 0 & 0 &-1 & 1 &-1 & 0& -1\\
\psi_8  &   3 & 3& -1& -1& -1&  0&  3 & 0 & 0 & 1 &-1 & 1 & 0& -1\\
\psi_9  &   3 &-1&  3& -1&  1&  3&  0 & 0  &0 &-1& -1 & 1& -1 & 0\\
\psi_{10} &  3& -1&  3& -1& -1&  3&  0&  0 & 0 & 1 & 1 &-1 &-1 & 0\\
\psi_{11} &  6& -2&  6& -2&  0& -3&  0&  0 & 0 & 0 & 0 & 0 & 1 & 0\\
\psi_{12} &  6&  6& -2& -2&  0&  0& -3&  0 & 0 & 0 & 0 & 0 & 0 & 1\\
\psi_{13} &  9& -3& -3&  1&  1&  0&  0&  0 & 0 & 1 &-1 &-1 & 0 & 0\\
\psi_{14} &  9& -3& -3&  1& -1&  0&  0&  0 & 0 &-1 & 1 & 1 & 0 & 0\\
\end{array}
$$
\caption{Character table of $\Gal(L  L'/\Q)$} \label{ctLL'}
\end{table}

Consider the subfields $L_3$ and $L_4$ of $L$ defined in \cite{BFGL07}. We recall that $L_4/\Q$ denotes a quartic extension generated by a root of the polynomial $f$ and~$L_3$ denotes a cubic extension generated by a root of the resolvent of $f$. In loc. cit. page~375, it is shown that there exist elliptic curves $E_R$ defined over $L_3$ and $E_S$ defined over $\Q$ such that
\begin{equation}\label{restric}
J(C_2)\sim_\Q \Res^{L_3}_\Q E_R\qquad\text{and}\qquad J(C_2)\times E_S\sim_\Q \Res^{L_4}_\Q E_S\,.
\end{equation}
According to loc. cit. page~371, the elliptic curve~$E_S$ is the one with conductor~21 and label~A3 in Cremona's Tables. Since there is a single isogeny class of conductor~21, we have that $E_S\sim_\Q E_{21}$.

\begin{lema}\label{calcfac} Let $p$ be a prime of good reduction for both $E_{63}$ and $J(C_2)$. Let $(1-\alpha T)(1-\overline \alpha T)$ be the local factor $L_p(E_{63}/\Q,T)$. Then, the local factor $L_p(J(C_2)/\Q,T)$ equals
$$
\begin{cases}
  L_p(E_{63}/\Q,T)(1-\zeta_3 \alpha T)(1-\overline\zeta_3\overline \alpha T)(1-\overline\zeta_3 \alpha T)(1-\zeta_3 \overline \alpha T) & \text{ if $f_{L_3}=3$}\\
  L_p(E_{63}/\Q,T)(1-i \alpha T)(1+i\overline \alpha T)(1+i \alpha T)(1-i\overline \alpha T)& \text{ if $f_{L_4}=4$,}
\end{cases}
$$
where $f_{L_3}$ and $f_{L_4}$ denote the residue class degrees of $p$ in $L_3$ and $L_4$, respectively, and $\zeta_3$ a primitive cubic root of unity.
\end{lema}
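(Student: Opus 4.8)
The plan is to read off both local factors from the two $\Q$-isogeny relations in (\ref{restric}), combined with the standard description of local $L$-factors under restriction of scalars: if $k'/\Q$ is unramified at $p$ and $E'/k'$ has good reduction at the primes above $p$, then $V_\ell(\Res^{k'}_\Q E')\simeq\Ind_{G_{k'}}^{G_\Q}V_\ell(E')$, so that
$$L_p(\Res^{k'}_\Q E'/\Q,T)=\prod_{\mathfrak{P}\mid p}L_{\mathfrak{P}}(E'/k',T^{f_{\mathfrak{P}}})\,,$$
the product running over the primes $\mathfrak{P}$ of $k'$ above $p$, with $f_{\mathfrak{P}}$ the residue degree and with $\Frob_{\mathfrak{P}}$ acting on $V_\ell(E')$ as $\varrho_{E'}(\Frob_p)^{f_{\mathfrak{P}}}$. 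First I would check that the hypotheses force $p\nmid 21$ and, via (\ref{restric}), that $p$ is unramified in $L_3$ and in $L_4$ and that $E_R$, $E_S$ and $E_{21}$ have good reduction at the primes above $p$; since $L=L_3\cdot L_4\cdot\Q(\sqrt{-3})$, this also gives that $p$ is unramified in $L$, so $\Frob_p$ determines a conjugacy class in $\Gal(L/\Q)\simeq\mathcal S_4$.

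For the first assertion, the hypothesis $f_{L_3}=3$ holds exactly when $\Frob_p$ is a $3$-cycle; then $p$ has a unique prime $\mathfrak{P}$ of $L_3$ above it, with $f_{\mathfrak{P}}=3$, and $\Frob_{\mathfrak{P}}=\Frob_p^{3}$ acts trivially on $L$ (a $3$-cycle has order $3$), while a $3$-cycle being even means $p$ splits in $\Q(\sqrt{-3})$, so $L_p(E_{63}/\Q,T)=L_p(E_{21}/\Q,T)=(1-\alpha T)(1-\overline\alpha T)$. The key auxiliary step is that $E_R\sim_L E_{21}$: since $L$ contains the Galois closure of $L_3$ (it is the splitting field of $f$), the three conjugates of $E_R$ are defined over $L$ and $\Res^{L_3}_\Q E_R\times_\Q L\simeq\prod_{\iota\colon L_3\hookrightarrow L}E_R^{\iota}$, so $\prod_{\iota}E_R^{\iota}\sim_L J(C_2)\sim_L E_{21}^3$ (the last isogeny because $\phi_{21}$ is defined over $L$ and $J(C_1)\sim_\Q E_{21}^3$); as $E_{21}$ has no complex multiplication, each factor $E_R^{\iota}$, and in particular $E_R$, is $L$-isogenous to $E_{21}$. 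Hence $\Frob_{\mathfrak{P}}$ acts on $V_\ell(E_R)$, which is $G_L$-isomorphic to $V_\ell(E_{21})$, with characteristic polynomial $\det(1-\varrho_{E_{21}}(\Frob_p)^{3}T;V_\ell(E_{21}))=(1-\alpha^3 T)(1-\overline\alpha^3 T)$, so $L_p(J(C_2)/\Q,T)=L_{\mathfrak{P}}(E_R/L_3,T^3)=(1-\alpha^3T^3)(1-\overline\alpha^3T^3)$; factoring $1-\beta^3T^3=(1-\beta T)(1-\zeta_3\beta T)(1-\overline\zeta_3\beta T)$ for $\beta\in\{\alpha,\overline\alpha\}$ and isolating $(1-\alpha T)(1-\overline\alpha T)=L_p(E_{63}/\Q,T)$ gives the first formula.

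For the second assertion, $f_{L_4}=4$ holds exactly when $\Frob_p$ is a $4$-cycle; then $p$ has a unique prime $\mathfrak{P}$ of $L_4$ above it with $f_{\mathfrak{P}}=4$ and $\Frob_{\mathfrak{P}}=\Frob_p^{4}$ trivial on $L$, while a $4$-cycle being odd means $p$ is inert in $\Q(\sqrt{-3})$, so that $L_p(E_{21}/\Q,T)=(1+\alpha T)(1+\overline\alpha T)$ when $L_p(E_{63}/\Q,T)=(1-\alpha T)(1-\overline\alpha T)$, i.e.\ $\varrho_{E_{21}}(\Frob_p)$ has eigenvalues $-\alpha,-\overline\alpha$. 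Using the second relation of (\ref{restric}) with $E_S\sim_\Q E_{21}$,
$$L_p(J(C_2)/\Q,T)\cdot L_p(E_{21}/\Q,T)=L_{\mathfrak{P}}(E_{21}/L_4,T^4)=(1-\alpha^4T^4)(1-\overline\alpha^4T^4)$$
(using $(-\alpha)^4=\alpha^4$). Dividing by $L_p(E_{21}/\Q,T)=(1+\alpha T)(1+\overline\alpha T)$ and factoring $1-\beta^4T^4=(1-\beta T)(1+\beta T)(1-i\beta T)(1+i\beta T)$ leaves $(1-\alpha T)(1-\overline\alpha T)(1-i\alpha T)(1+i\alpha T)(1-i\overline\alpha T)(1+i\overline\alpha T)$, and recognising $(1-\alpha T)(1-\overline\alpha T)=L_p(E_{63}/\Q,T)$ gives the second formula.

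The routine part is the factorisation of $1-\beta^nT^n$ over $\Q(\zeta_n)$. The main obstacle is the preliminary bookkeeping: translating the residue-degree hypotheses into the group-theoretic statements about $\Frob_p$ in $\mathcal S_4$ — hence into $\Frob_{\mathfrak{P}}$ acting trivially on $L$ and into the splitting type of $p$ in $\Q(\sqrt{-3})$ — and establishing the auxiliary $L$-isogeny $E_R\sim_L E_{21}$ used in the first case; I expect the careful identification of the unique prime of $L_3$ (resp.\ $L_4$) above $p$ and of its Frobenius to be where most attention is needed.
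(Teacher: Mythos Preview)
Your argument is correct and rests on the same two inputs as the paper's proof: the isogeny relations in (\ref{restric}) together with the product formula $L_p(\Res^{k'}_\Q E'/\Q,T)=\prod_{\mathfrak P\mid p}L_{\mathfrak P}(E'/k',T^{f_{\mathfrak P}})$. The structure of both proofs is therefore the same; what differs is the bookkeeping in the middle.

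The paper, having noted that $E_R\simeq_L E_{63}$, computes $\#\tilde E_R(\F_{p^3})=\#\tilde E_{63}(\F_{p^3})$ via the logarithmic derivative of the zeta function, rewrites $L_p(J(C_2)/\Q,T)$ as an explicit polynomial in $T$ with coefficients in $a=\alpha+\overline\alpha$ and $p$, factors it as $(1-aT+pT^2)(1+aT+(a^2-p)T^2+apT^3+p^2T^4)$, and then extracts the roots of the quartic; the case $f_{L_4}=4$ is handled analogously. You instead identify $\Frob_{\mathfrak P}$ with $\Frob_p^{\,f}$, observe that $\Frob_p^{\,f}\in G_L$ because the relevant cycle in $\mathcal S_4$ has order $f$, transport the eigenvalues through the $G_L$-isomorphism $V_\ell(E_R)\simeq V_\ell(E_{21})$, and land directly on $(1-\alpha^fT^f)(1-\overline\alpha^fT^f)$, which you then split by the cyclotomic factorisation $1-\beta^fT^f=\prod_\zeta(1-\zeta\beta T)$. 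This bypasses the point-count and the quartic root-finding entirely and makes the appearance of $\zeta_3$ and $\pm i$ transparent. A further nicety is that you supply a self-contained argument for $E_R\sim_L E_{21}$ via the base change of $\Res^{L_3}_\Q E_R$, whereas the paper simply invokes $E_R\simeq_L E_{63}$ as a known fact. Your group-theoretic reading of $f_{L_3}=3$ and $f_{L_4}=4$ as ``$\Frob_p$ is a $3$-cycle/$4$-cycle'' is exactly the information the paper encodes in the implications ``$f_{L_3}=3\Rightarrow f_L=3$'' and ``$f_{L_4}=4\Rightarrow f_L=4,\ f_{\Q(\sqrt{-3})}=2$''.
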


\begin{proof} Recall that for an abelian variety $A$ defined over a number field extension $L/k$ and a prime $\mathfrak p$ of $k$, we have the equality
$$L_{\mathfrak p}(\Res^L_kA/k,T)=\prod_{\mathfrak P|\mathfrak p} L_{\mathfrak P}(A/L,T^{f_{\mathfrak P}})\,,$$
where $f_\mathfrak P$ denotes the residue class degree of the prime $\mathfrak P$ of $L$. Thus, if $f_{L_3}=3$ and~$\mathfrak P_3$ denotes the prime of $L_3$ over $p$, relation (\ref{restric}) gives
\begin{equation}\label{punts}
L_p(J(C_2)/\Q,T)=L_{\mathfrak P_3}(E_R/L_3,T^3)=1-(1+p^3-\#\tilde E_R(\F_{p^3}))T+p^3T^6\,,
\end{equation}
where $\tilde E_R$ denotes the reduction of $E_R$ modulo the prime $\mathfrak P_3$. Since $E_R\simeq_L E_{63}$ and $f_{L_3}=3$ implies $f_{L}=3$, one has that $\#\tilde E_R(\F_{p^3})=\#\tilde E_{63}(\F_{p^3})$. Write $L_p(E_{63}/\Q,T)=1-aT+pT^2$.  By differentiating the function
$$\Log\left(\frac{1-aT+pT^2}{(1-T)(1-pT)}\right)=\sum_{n\geq 1}\#\tilde E_{63}(\F_{p^n})\frac{T^n}{n}\,,$$
one easily obtains that $\#\tilde E_{63}(\F_{p^3})=1+p^3-a^3+3ap$. Substituting in equation~(\ref{punts}) and factoring, yields
$$L_p(J(C_2)/\Q,T)=(1-aT+pT^2)(1+aT+(a^2-p)T^2+apT^3+p^2T^4)\,.$$
Now, a straightforward calculation of the roots of the second factor of the right hand side of the above expression leads us to the first statement of the lemma. For the second, if $\mathfrak P_4$ denotes the prime of $L_4$ over $p$, relation (\ref{restric}) and the fact that $E_S\sim_\Q E_{21}$ imply
$$L_p(J(C_2)/\Q,T)L_p(E_{21}/\Q,T)=L_{\mathfrak P_4}(E_{21}/L_4,T^4)\,.$$
Write again $L_p(E_{63}/\Q,T)=1-aT+pT^2$, and note that $f_{L_4}=4$ implies $f_L=4$ and $f_{\Q(\sqrt{-3})}=2$. Thus $L_p(E_{21}/\Q,T)=L_p(E_{63}/\Q,-T)=1+aT+pT^2$. Applying an analogous argument to the one in the previous case, one arrives at
$$L_{\mathfrak P_4}(E_{21}/L_4,T^4)=(1+aT+pT^2)(1-aT+pT^2)(1+(a^2-2p)T^2+p^2T^4)\,.$$
Now the lemma follows by computing the roots of the last factor of the above expression.
\end{proof}

\begin{lema}\label{coseta} The curves $E_{21}$ and $E_{63}$ do not appear in the decomposition up to isogeny over $\Q$ of $J(C_2)$.
\end{lema}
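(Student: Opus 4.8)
The plan is to first pin down the $G_\Q$-module $V_\ell(J(C_2))$ from the restriction-of-scalars relations (\ref{restric}), and then to rule out both candidate elliptic factors by a short endomorphism computation that uses that $E_{21}$ has no complex multiplication. For the first step I would use the second relation in (\ref{restric}): since $E_S\sim_\Q E_{21}$, it gives, upon passing to Tate modules and applying the projection formula,
$$V_\ell(J(C_2))\oplus V_\ell(E_{21})\;\cong\; V_\ell(\Res^{L_4}_\Q E_{21})\;\cong\;\Ind_{G_{L_4}}^{G_\Q}\big(V_\ell(E_{21})|_{G_{L_4}}\big)\;\cong\;\big(\Ind_{G_{L_4}}^{G_\Q}\mathbf{1}\big)\otimes V_\ell(E_{21})\,.$$
Now $\Ind_{G_{L_4}}^{G_\Q}\mathbf{1}$ factors through $\Gal(L/\Q)\cong\mathcal S_4$ (because $L$ is the Galois closure of $L_4$) and is the permutation representation of $\mathcal S_4$ on the four roots of $f$, hence isomorphic to $\mathbf{1}\oplus\tau$, with $\tau$ the standard $3$-dimensional irreducible representation of $\mathcal S_4$ (one of $\varrho_4,\varrho_5$ in Table~\ref{ctL}; we shall not need to know which). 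Cancelling the common summand $V_\ell(E_{21})$, which is legitimate by the Krull--Schmidt theorem for finite-dimensional $\Q_\ell[G_\Q]$-modules, yields $V_\ell(J(C_2))\cong\tau\otimes V_\ell(E_{21})$, where $\tau$ is inflated to $G_\Q$.

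Next I would observe that the only elliptic curves that could possibly appear in the decomposition up to $\Q$-isogeny of $J(C_2)$ are $E_{21}$ and $E_{63}$: any such factor is $\Qbar$-isogenous to $E_{21}$, hence (no CM) a quadratic twist of $E_{21}$, and since it becomes isogenous to $E_{21}$ over $L$ the corresponding quadratic field must be contained in $L$; but $\Q(\sqrt{-3})$ is the unique quadratic subfield of the $\mathcal S_4$-extension $L$. So it suffices to show, for $E'\in\{E_{21},E_{63}\}$, that $V_\ell(E')$ is not a direct summand of $V_\ell(J(C_2))\cong\tau\otimes V_\ell(E_{21})$. Writing $V_\ell(E')\cong\psi\otimes V_\ell(E_{21})$, where $\psi$ is the trivial character for $E'=E_{21}$ and the quadratic character $\epsilon$ of $\Q(\sqrt{-3})/\Q$ for $E'=E_{63}$, this reduces to checking that
$$\Hom_{\Q_\ell[G_\Q]}\big(V_\ell(E'),\,\tau\otimes V_\ell(E_{21})\big)\;\cong\;\big(\psi\otimes\tau\otimes\End_{\Q_\ell}(V_\ell(E_{21}))\big)^{G_\Q}=0\,.$$

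To conclude I would use the decomposition $\End_{\Q_\ell}(V_\ell(E_{21}))\cong\Q_\ell\oplus\mathrm{ad}^0$ into the scalars and the $3$-dimensional trace-zero part; since $E_{21}$ has no complex multiplication, Serre's open image theorem shows that $\mathrm{ad}^0$ is absolutely irreducible and of infinite image, hence does not factor through any finite quotient of $G_\Q$. The $\Q_\ell$-summand then contributes $(\psi\otimes\tau)^{G_\Q}$, which vanishes because $\psi\otimes\tau$ is an irreducible nontrivial representation of $\mathcal S_4$ (for $E_{63}$ it is $\tau$ twisted by the sign character, i.e. the other $3$-dimensional irreducible). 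The $\mathrm{ad}^0$-summand contributes $\Hom_{G_\Q}(\psi\otimes\tau,\mathrm{ad}^0)$ — using that $\tau$ is self-dual — which vanishes because a $3$-dimensional Artin representation and a $3$-dimensional $\ell$-adic representation of infinite image cannot share an irreducible constituent. Hence the $\Hom$-space is zero, and neither $E_{21}$ nor $E_{63}$ appears in the decomposition up to $\Q$-isogeny of $J(C_2)$. The main obstacle is the first step, i.e. extracting the exact global description $V_\ell(J(C_2))\cong\tau\otimes V_\ell(E_{21})$ from (\ref{restric}); everything afterwards is formal. It is worth emphasizing that a purely local argument via Lemma~\ref{calcfac} cannot settle the case of $E_{63}$: the sign character of $\mathcal S_4$ is an eigenvalue of its standard representation at \emph{every} conjugacy class, so $L_p(E_{63}/\Q,T)$ divides $L_p(J(C_2)/\Q,T)$ for all primes $p$ of good reduction, and the failure of $E_{63}$ to be a $\Q$-isogeny factor is invisible at the level of $L$-functions.
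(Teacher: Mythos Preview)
Your argument is correct and takes a genuinely different route from the paper. The paper argues by contradiction: assuming $J(C_2)\sim_\Q E\times A$ with $E\in\{E_{21},E_{63}\}$, it studies the four-dimensional representation $\theta(A,E_{63}^2;L/\Q)$ and uses the explicit local factor computation of Lemma~\ref{calcfac} at primes with $f_{L_3}=3$ to force $\theta\simeq 2\varrho_3$, and then at primes with $f_{L_4}=4$ to see that the eigenvalues $\pm i$ are incompatible with the eigenvalues $\pm 1$ of $\varrho_3$ on the class $4a_L$. Your approach instead extracts from~(\ref{restric}) the global identification $V_\ell(J(C_2))\simeq\tau\otimes V_\ell(E_{21})$ via the projection formula and Krull--Schmidt, and then rules out both elliptic factors by a Hom computation resting on Serre's open image theorem for the non-CM curve $E_{21}$. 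The trade-off is clear: the paper's proof is more elementary (it avoids open image) and showcases the $\theta$-machinery developed in Sections~2--3, whereas your proof is more structural and has the bonus of pinning down $V_\ell(J(C_2))$ explicitly at the outset---a fact the paper only reaches later, after computing $\theta_{21}$ and invoking Corollary~\ref{eldesc}. Your closing observation, that the failure of $E_{63}$ to be a $\Q$-factor is invisible at the level of Euler factors because the sign character occurs as an eigenvalue of the standard representation of $\mathcal S_4$ on every class, is exactly the phenomenon the paper highlights immediately after the lemma.
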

\begin{proof} Suppose that $J(C_2)\sim_\Q E_{21}\times A$ or $J(C_2)\sim_\Q E_{63}\times A$, for some abelian surface $A$ defined over $\Q$, and we will reach a contradiction. Then $A\sim_L E_{63}^2$. As explained in the previous sections, we can consider the Artin representation $\theta(A,E_{63}^2;L/\Q)$ and it has dimension $4$. Let $p$ be a non-supersingular prime for $E_{63}$ of good reduction for both $E_{63}$ and $A$ with residue class degree $f_{L_3}=3$. If we write $L_p(E_{63}/\Q,T)=(1-\alpha T)(1-\overline \alpha T)$, the condition of $p$ being non-supersingular guarantees that $\overline\alpha/\alpha$ has infinite order (see for example \cite{Tat66}, theorem $2$). From lemma \ref{calcfac}, it follows that the only possible values for a quotient of a root of $L_{p}(E_{63}/\Q,T)$ and a root of $L_{p}(A/\Q,T)$ that have finite order are $\zeta_3$ and $\overline\zeta_3$. Thus, $\theta(A,E_{63}^2;L/\Q)\simeq 2\cdot\varrho_3$ (see Table \ref{ctL}). Let $p$ be a non-supersingular prime for $E_{63}$ of good reduction for both $E_{63}$ and $A$ with residue class degree $f_{L_4}=4$. Lemma \ref{calcfac} shows that the only possible values for a quotient of a root of $L_{p}(E_{63}/\Q,T)$ and a root of $L_{p}(A/\Q,T)$ that have finite order are $i$ and $-i$. We reach a contradiction by observing that the  eigenvalues of $\varrho_3$ on the class $4a_L$ are are $1$ and $-1$ (see Table \ref{ctL}).
\end{proof}

We have seen that $E_{63}$ is not a $\Q$-factor of $J(C_2)$. Nevertheless, as we will prove later, for every prime $p$ of good reduction for both $J(C_2)$ and $E_{63}$, the polynomial $L_p(E_{63}/\Q,T)$ divides $L_p(J(C_2)/\Q,T)$. Another example where this curious phenomenon occurs is the following one. Let $E$ denote an elliptic curve over $\Q$ and let $d_1$ and $d_2$ be non-square rational numbers such that $d_1d_2$ is also a non-square. Consider the product $A=E_{d_1}\times E_{d_2}\times E_{d_1d_2}$ of the quadratic twists of $E$ by $d_1$, $d_2$ and $d_1d_2$. Then, for every prime $p$ of good reduction for both $E$ and $A$, it is clear that $L_p(E/\Q,T)$ divides $L_p(A/\Q,T)$, even though $E$ is not a $\Q$-factor of $A$.

\begin{coro} The Jacobian $J(C_2)$ is simple over $\Q$.
\end{coro}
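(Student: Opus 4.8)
The plan is to argue by contradiction, reducing the claim to Lemma~\ref{coseta}. Suppose $J(C_2)$ is not simple over $\Q$. By Poincaré reducibility over $\Q$ we may write $J(C_2)\sim_\Q B_1\times B_2$ with $B_1,B_2$ nonzero abelian varieties defined over $\Q$; since $\dim J(C_2)=3$, one of the two factors has dimension $1$, so after relabelling $B_1$ is an elliptic curve over $\Q$ which is a $\Q$-isogeny factor of $J(C_2)$. I will show that any such $B_1$ is necessarily $\Q$-isogenous to $E_{21}$ or to $E_{63}$, contradicting Lemma~\ref{coseta}.

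First I would pin down the behaviour of $B_1$ over $L$. Since $J(C_2)\sim_L E_{21}^3$ and $E_{21}$ has no complex multiplication, $\End_L^0(E_{21}^3)\simeq M_3(\Q)$ is a simple algebra, so every nonzero abelian subvariety of $E_{21}^3$ over $L$ is $L$-isogenous to a power of $E_{21}$; comparing dimensions forces $B_1\sim_L E_{21}$. Hence the Artin representation $\theta(B_1,E_{21};L/\Q)$ is defined, and by Remark~\ref{dim} it has dimension $\dim_\Q\Hom_L^0(B_1,E_{21})=\dim_\Q\End_L^0(E_{21})=1$. Since $\Gal(L/\Q)\simeq\mathcal{S}_4$ has exactly two one-dimensional characters, the trivial one $\chi_1$ and the sign character $\chi_2$ (see Table~\ref{ctL}), we obtain $\theta(B_1,E_{21};L/\Q)\in\{\chi_1,\chi_2\}$; moreover, by Theorem~\ref{bont} and a dimension count,
$$V_\ell(E_{21})\simeq \theta(B_1,E_{21};L/\Q)\otimes V_\ell(B_1)$$
as $\Q_\ell[G_\Q]$-modules.

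Finally I would dispose of the two cases. If $\theta(B_1,E_{21};L/\Q)=\chi_1$, then $V_\ell(E_{21})\simeq V_\ell(B_1)$, and Faltings' isogeny theorem gives $B_1\sim_\Q E_{21}$. If $\theta(B_1,E_{21};L/\Q)=\chi_2$, then since $\Ker\chi_2=\mathcal{A}_4$, the character $\chi_2$ factors through the unique quadratic subextension of $L/\Q$, which is $\Q(\sqrt{-3})$ by the hypothesis on $L$; thus $\chi_2$ is the inflation of the quadratic character $\chi_q$ of $\Gal(\Q(\sqrt{-3})/\Q)$. Twisting the displayed isomorphism by $\chi_q$ then gives $V_\ell(B_1)\simeq\chi_q\otimes V_\ell(E_{21})\simeq V_\ell(E_{63})$, because $E_{63}$ is the quadratic twist of $E_{21}$ by $\Q(\sqrt{-3})$ (the two curves are isomorphic over $\Q(\sqrt{-3})$ but not over $\Q$, and $E_{21}$ has no complex multiplication, so all of its twists are quadratic); hence $B_1\sim_\Q E_{63}$ by Faltings. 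In both cases we reach a contradiction with Lemma~\ref{coseta}, so $J(C_2)$ is simple over $\Q$.

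I expect the last case, $\theta(B_1,E_{21};L/\Q)=\chi_2$, to be the main obstacle, since it is the only step that invokes the specific arithmetic of the example: the role of $\Q(\sqrt{-3})$ as the unique quadratic subfield of the $\mathcal{S}_4$-extension $L$, and the explicit quadratic twist relating $E_{21}$ and $E_{63}$. Everything else follows formally from Poincaré reducibility, Remark~\ref{dim}, Theorem~\ref{bont}, Lemma~\ref{coseta}, and the character table of $\Gal(L/\Q)$ in Table~\ref{ctL}.
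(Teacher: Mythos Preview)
Your proof is correct and follows essentially the same route as the paper's: assume a decomposition, extract an elliptic $\Q$-factor, observe it becomes $L$-isogenous to $E_{21}$ (the paper uses $E_{63}$, which is the same over $L$), note that the attached Artin representation is one-dimensional and hence $\chi_1$ or $\chi_2$, and conclude via Theorem~\ref{bont} that the factor is $\Q$-isogenous to $E_{21}$ or $E_{63}$, contradicting Lemma~\ref{coseta}. Your write-up is simply more explicit about Poincar\'e reducibility, the dimension count, and the identification of $\chi_2$ with the inflation of $\chi_q$.
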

\begin{proof} Assume $J(C_2)$ decomposes, i.e., $J(C_2)\sim_\Q E\times A$, for some elliptic curve $E$ and some abelian surface $A$, both defined over $\Q$. It follows that $E_{63}\sim_L E$. Let $\theta$ stand for $\theta(E,E_{63};L/\Q)$. Since $\dim\theta=1$ ($E$ does not have complex multiplication), we have that either $\Tr\theta=\chi_1$ or  $\chi_2$. In any case, $\theta$ factors through $\Q(\sqrt{-3})$, the only quadratic extension of $L$. Then, theorem \ref{bont} implies that $V_\ell(E)\simeq \theta \otimes V_\ell(E_{63})$, from which we deduce that either $E\sim_\Q E_{21}$ or $E\sim_\Q E_{63}$. But this is a contradiction with the previous lemma.
\end{proof}

We now compute all the Artin representations $\theta_{ij}$ involved in the above graph of isogenies. As for $\theta_{32}$, it will be computed in proposition \ref{megtrans} from $\theta_{21}$ and $\theta_{13}$ and the transitivity property stated in proposition \ref{trans}.

\begin{prop} We have:
\begin{enumerate}[i)]
\item $\Tr(\theta_{21})=3\cdot \chi_5 $
\item $\Tr(\theta_{13})=3\cdot \chi'_5 $
\item $\Tr(\theta_{10})=6\cdot \chi_t+ 3\cdot\chi_q $
\item $\Tr(\theta_{02})= \chi_4+2\cdot\chi_5 $
\item $\Tr(\theta_{03})= \chi'_4+2\cdot\chi'_5 $.
\end{enumerate}
\end{prop}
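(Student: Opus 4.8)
The plan is to compute each of the five characters by combining three ingredients: the dimension count from Remark \ref{dim}, the identification of constituents with $\Hom$-spaces over subextensions from Proposition \ref{constituents}, and the explicit local-factor data from Lemma \ref{calcfac} together with the structural facts about $J(C_2)\sim_\Q E_{21}^2\times E_{63}$ (over $\Q$) and $J(C_2)\sim_L E_{21}^3$ (over $L$), similarly for $J(C_1)\sim_\Q E_{21}^3$ and $J(C_3)$. Throughout I would use that $E_{21}$ and $E_{63}$ are $\Q(\sqrt{-3})$-isomorphic quadratic twists without CM, and that $L$ (resp. $L'$) is an $\mathcal S_4$-extension containing $\Q(\sqrt{-3})$.

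First I would do $\theta_{10}=\theta(J(C_1),J(C_0);\Q(\sqrt{-3})/\Q)$. Here $\Gal(\Q(\sqrt{-3})/\Q)$ has order $2$, so $\theta_{10}=a\cdot\chi_t+b\cdot\chi_q$ with $a+b=\dim_\Q\End^0_{\Q(\sqrt{-3})}(J(C_1))=\dim_\Q\End^0_{\Q(\sqrt{-3})}(E_{21}^3)=9$ by Remark \ref{dim}. By Proposition \ref{constituents}, $a=\dim_\Q\Hom^0_\Q(J(C_1),J(C_0))$; using $J(C_1)\sim_\Q E_{21}^3$ and $J(C_0)\sim_\Q E_{21}^2\times E_{63}$ and the fact that $E_{21}\not\sim_\Q E_{63}$ (while both have $\End^0_\Q\simeq\Q$), this $\Hom$-space has dimension $2\cdot 3 = 6$, giving $a=6$, $b=3$. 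Next, for $\theta_{02}=\theta(J(C_0),J(C_2);L/\Q)$ (dimension $9$) and $\theta_{03}$ (dimension $9$, by the symmetry of the construction replacing $L$ by $L'$), I would use Corollary \ref{eldesc}: since $J(C_2)$ is simple over $\Q$ (as just proved in the preceding corollary) and $J(C_0)\sim_\Q E_{21}^2\times E_{63}$ is isogenous over $\Q(\sqrt{-3})$, hence over $L$, to $E_{63}^3$ with $\End^0_\Q(E_{63})\simeq\Q$, Corollary \ref{eldesc} gives $\theta_{02}=3\cdot\varrho$ for a $3$-dimensional rational representation $\varrho$ with $V_\ell(J(C_2))\simeq\varrho\otimes V_\ell(E_{63})$. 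Comparing local factors via Lemma \ref{calcfac} then pins down $\varrho$: at primes with $f_{L_3}=3$ the eigenvalues of $\varrho(\Frob_p)$ must be $\{1,\zeta_3,\overline\zeta_3\}$ and at primes with $f_{L_4}=4$ they must be $\{-1,i,-i\}$, which among the characters in Table \ref{ctL} forces $\Tr\varrho = \chi_5$ (note $\varrho_4$ and $\varrho_5$ differ exactly on the class $4a_L$, where $\varrho_5$ has eigenvalues $\{-1,i,-i\}$). Wait—I must double check the shape of $\theta_{02}$: $J(C_0)\sim_\Q E_{21}^2\times E_{63}$ is not of the form $E^3$ over $\Q$, so Corollary \ref{eldesc} as stated does not directly apply; instead I would argue directly that $\theta_{02}$ is a sum of constituents coming from $\Hom^0_L(E_{21}^2\times E_{63}, J(C_2))\simeq \Hom^0_L(E_{21},J(C_2))^{\oplus 3}$ after base change to $\Q(\sqrt{-3})$, so $\theta_{02}$ is $3$ copies of the $3$-dimensional $\theta(E_{63},J(C_2);L/\Q)$-type piece plus corrections; pinning the multiplicities $\chi_4 + 2\chi_5$ rather than $3\chi_5$ then comes precisely from the Frobenius-eigenvalue analysis of Lemma \ref{calcfac} distinguishing $\varrho_4$ from $\varrho_5$, together with the constraint from Proposition \ref{constituents} that $\dim_\Q\Hom^0_\Q(J(C_0),J(C_2))=0$ (simplicity of $J(C_2)$ and $E_{63}$ not being a $\Q$-factor), which kills the trivial constituent and forces the $\chi_4$ term to appear.

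For $\theta_{21}=\theta(J(C_2),J(C_1);L/\Q)$, of dimension $9$: by Lemma \ref{selfdual}-type symmetry $\theta_{21}\simeq\theta_{12}$, and since $J(C_1)\sim_{\Q(\sqrt{-3})}E_{63}^3$ while $J(C_2)$ is simple over $\Q$ with $V_\ell(J(C_2))\simeq\varrho\otimes V_\ell(E_{63})$ where $\Tr\varrho=\chi_5$ (from the $\theta_{02}$ analysis, noting $V_\ell(E_{63})$ and $V_\ell(J(C_1))\sim E_{63}^3$ are related by the trivial twist over $\Q$), I expect $\theta_{21}=3\cdot\chi_5$: indeed $\Hom_{\Q_\ell[G_L]}(V_\ell(J(C_2)),V_\ell(J(C_1)))\simeq \Hom_{\Q_\ell[G_L]}(\varrho\otimes V_\ell(E_{63}), V_\ell(E_{63})^{\oplus 3})$ and, since $\varrho$ becomes trivial over $L$ (being inflated from $\Gal(L/\Q)$) this is $9$-dimensional, carrying the $G_\Q/G_L = \Gal(L/\Q)$-action $\varrho^*\otimes (\text{trivial from }E_{63}/E_{63}\text{ piece})\otimes(\text{action on }\varrho\text{-part})$, which works out to $3\varrho$, i.e.\ $3\chi_5$; the cleaner route is Corollary \ref{eldesc} applied with the roles arranged so that the ``$E^g$'' curve is $E_{63}^3\sim_L J(C_1)$ and $A'=J(C_2)$ — but $J(C_1)$ is $E_{21}^3$ over $\Q$, so I would apply \ref{eldesc} to the pair $(J(C_1), J(C_2))$ over $L/\Q$ with $E=E_{21}$, yielding $\theta_{12}=3\cdot\varrho'$ with $V_\ell(J(C_2))\simeq\varrho'\otimes V_\ell(E_{21})$, and then Lemma \ref{calcfac} (after twisting $E_{21}$ back to $E_{63}$ by $\chi_q$, which is trivial on $\Gal(L/\Q(\sqrt{-3}))$) again forces $\Tr\varrho'=\chi_5$. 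Then $\theta_{13}$ is obtained verbatim by the same argument with $L$ replaced by $L'$ and the primed character table, giving $\Tr\theta_{13}=3\chi_5'$.

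The main obstacle I expect is the $\theta_{02}$ (and $\theta_{03}$) computation: there the answer $\chi_4+2\chi_5$ is genuinely not a single isotypic block, so Corollary \ref{eldesc} does not apply cleanly and one must carefully separate the analysis over $\Q(\sqrt{-3})$ (where the relevant $\Hom$-spaces are visible and of total dimension $9$) from the finer splitting over $L$, and then invoke Lemma \ref{calcfac} at both types of primes ($f_{L_3}=3$ and $f_{L_4}=4$) to decide the multiplicities of $\chi_4$ versus $\chi_5$ — the two $3$-dimensional irreducibles of $\mathcal S_4$ that agree on every class except $4a_L$. The bookkeeping of how the three-fold copy of $E_{21}$ inside $J(C_0)$ interacts with the $G_\Q$-action (as opposed to the $G_L$-action) is the delicate point; everything else is dimension counting via Remark \ref{dim} and Proposition \ref{constituents} plus Chebotarev comparison of Frobenius eigenvalues.
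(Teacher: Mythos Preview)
Your treatment of $\theta_{10}$ is identical to the paper's, and your approach to $\theta_{21}$ via Corollary~\ref{eldesc} is a legitimate (and arguably cleaner) variant: applying it to $A=J(C_1)\sim_\Q E_{21}^3$, $A'=J(C_2)$ immediately gives $\theta_{21}=3\varrho$ with $\varrho$ a $3$-dimensional irreducible, and Lemma~\ref{calcfac} at a prime with $f_{L_4}=4$ then forces $\varrho=\varrho_5$ once you account for the sign $E_{21}\leftrightarrow E_{63}$. The paper instead eliminates $\varrho_1,\dots,\varrho_4$ one by one using Proposition~\ref{constituents} and Lemma~\ref{calcfac}; both routes work.

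The genuine gap is in $\theta_{02}$. Your proposed mechanism --- ``Proposition~\ref{constituents} gives $\dim_\Q\Hom^0_\Q(J(C_0),J(C_2))=0$, which kills the trivial constituent and forces the $\chi_4$ term'' --- does not do what you want: neither $\varrho_4$ nor $\varrho_5$ contains the trivial representation, so this constraint is vacuous for deciding between $3\chi_5$, $\chi_4+2\chi_5$, and $2\chi_4+\chi_5$. Also, the decomposition you write, $\Hom^0_L(E_{21}^2\times E_{63},J(C_2))\simeq\Hom^0_L(E_{21},J(C_2))^{\oplus 3}$, is only valid as $\Gal(L/\Q(\sqrt{-3}))$-modules, not as $\Gal(L/\Q)$-modules.

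The paper resolves $\theta_{02}$ with a double application of the transitivity property (Proposition~\ref{trans}): first $\theta_{02}\subseteq\theta_{21}\otimes\Inf_{\Q(\sqrt{-3})}^{L}\theta_{10}=18\varrho_5\oplus 9\varrho_4$ forces $\theta_{02}=n\chi_4+m\chi_5$ with $n+m=3$; then reversing roles, $\Inf\theta_{10}=6\varrho_1\oplus 3\varrho_2\subseteq\theta_{02}\otimes\theta_{21}$, and expanding the tensor product yields $3m\cdot\varrho_1\oplus 3n\cdot\varrho_2\oplus\cdots$, so $m\geq 2$ and $n\geq 1$. You never invoke Proposition~\ref{trans}, which is the missing idea.

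That said, your instinct to exploit the $\Q$-decomposition of $J(C_0)$ can be made to work, and is simpler than the paper's argument once $\theta_{21}$ is known. Since the product $E_{21}^2\times E_{63}$ is over $\Q$, one has an additive splitting of $\Gal(L/\Q)$-modules
\[
\theta_{02}=2\cdot\theta(E_{21},J(C_2);L/\Q)\oplus\theta(E_{63},J(C_2);L/\Q).
\]
From $\theta_{21}=3\chi_5$ one reads off $\theta(E_{21},J(C_2);L/\Q)=\chi_5$; and composing with a fixed $\Q(\sqrt{-3})$-isogeny $E_{63}\to E_{21}$ gives $\theta(E_{63},J(C_2);L/\Q)=\chi_2\otimes\chi_5=\chi_4$. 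Hence $\theta_{02}=2\chi_5+\chi_4$. If you reorganize your argument to do $\theta_{21}$ first and then use this additivity, you get a complete proof that avoids Proposition~\ref{trans} entirely.
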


\begin{proof}
By proposition \ref{constituents} and  lemma \ref{coseta}, the representations $\varrho_1$ and $\varrho_2$ are not constituents of $\theta_{21}$. Let $p$ be a non-supersingular prime for $E_{63}$ of good reduction for both $E_{63}$ and $J(C_2)$ with residue class degree $f_{L_4}=4$. Thus $L_p(E_{21}/\Q,T)=L_p(E_{63}/\Q,-T)$. By lemma~\ref{calcfac}, the only possible values for a quotient of a root of $L_{p}(E_{21}^3/\Q,T)$ and a root of $L_{p}(J(C_2)/\Q,T)$ of finite order are $-1,i$ and $-i$. Since $\varrho_3(4a_L)$ and $\varrho_4(4a_L)$ have $1$ as an eigenvalue, the representations $\varrho_3$ and $\varrho_4$ are not constituents of $\theta_{21}$, neither. Proceeding analogously, one obtains that $\Tr(\theta_{13})=3\cdot \chi_5'$.

To prove that $\Tr(\theta_{10})=6\cdot \chi_t+ 3\cdot\chi_q $ it is enough to observe that $$\dim_\Q\Hom_\Q(J(C_1),J(C_0))=\dim_\Q\Hom_\Q(E_{21}^3,E_{21}^2\times E_{63})=6$$ and then apply proposition \ref{constituents}.

Proposition \ref{sera} tells us that $\Inf_{\Q(\sqrt{-3})}^{L}\theta_{10}\simeq 6\cdot\varrho_1\oplus 3\cdot\varrho_2$ is isomorphic to a subrepresentation of $\theta(J(C_1),J(C_0);L/\Q)$. But since they both have the same dimension, they are in fact isomorphic. Proposition \ref{trans} tells us that $\theta_{02}$ is isomorphic to a subrepresentation of
    $$\theta_{21}\otimes\theta(J(C_1),J(C_0);L/\Q)\simeq 3\cdot \varrho_5 \otimes (6\cdot\varrho_1\oplus 3\cdot\varrho_2)=18\cdot\varrho_5\oplus 9\cdot\varrho_4\,.$$
Hence, $\Tr(\theta_{02})=n\cdot \chi_4+m\cdot\chi_5 $, for certain integers $0\leq n,\,m\leq 3$ such that $m+n=3$.  Applying again proposition $\ref{trans}$, we have that $\theta(J(C_1),J(C_0);L/\Q)$ is isomorphic to a subrepresentation of
$$
\begin{array}{l@{\,\simeq\,}l}
 \theta_{02}\otimes\theta_{21} & \displaystyle{(3n\cdot \varrho_4\otimes\varrho_5)\oplus (3m\cdot \varrho_5\otimes\varrho_5) }\\
 & \displaystyle{3n (\varrho_2\oplus\varrho_3\oplus\varrho_4\oplus\varrho_5)\oplus 3m(\varrho_1\oplus\varrho_3\oplus\varrho_4\oplus\varrho_5)}\\
 & \displaystyle{3m \cdot \varrho_1\oplus 3n\cdot \varrho_2\oplus 9\cdot\varrho_3\oplus 9\cdot\varrho_4\oplus 9\cdot\varrho_5\,.}
\end{array}
$$
Hence, $m\geq 2$ and $n\geq 1$, which implies $m= 2$ and $n= 1$ as desired. Proceeding analogously, one obtains that $\Tr(\theta_{03})= \chi_4'+2\cdot\chi_5'$.
\end{proof}

Now we can prove that for every prime $p$ of good reduction for both $E_{63}$ and $J(C_2)$, the polynomial $L_p(E_{63}/\Q,T)$ divides $L_p(J(C_2)/\Q,T)$. Observe that
$$
\theta(E_{63}^3,J(C_2);L/\Q)\simeq \varrho_2\otimes\theta(E_{21}^3,J(C_2);L/\Q)\simeq \varrho_4\,.
$$
Since the image of any element in $\Gal(L/\Q)$ by $\varrho_4$ has $1$ as an eigenvalue, the polynomials $L_p(E_{63}^3/\Q,T)$ and $L_p(J(C_2)/\Q,T)$ have a common root for every such $p$. Since $L_p(E_{63}/\Q,T)$ is irreducible, it must divide $L_p(J(C_2)/\Q,T)$.

\begin{prop}\label{megtrans} We have $\Tr(\theta_{32})=\psi_{13}$.
\end{prop}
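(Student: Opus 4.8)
The plan is to realise $\theta_{32}$ inside a tensor product governed by $\theta_{21}$ and $\theta_{13}$ through the transitivity of Proposition~\ref{trans}, and then to identify that tensor product with the $9$-dimensional irreducible character $\psi_{13}$. First I would move everything to the big field $LL'$. Since $\phi_{21}$ is defined over $L\subseteq LL'$ and $\theta(J(C_1),J(C_2);L/\Q)\simeq\theta_{21}$ has character $3\chi_5$, part $ii)$ of Proposition~\ref{sera} shows that $\Inf_L^{LL'}(3\chi_5)$ is a subrepresentation of $\theta(J(C_1),J(C_2);LL'/\Q)$; by Remark~\ref{dim} the latter has dimension $\dim_\Q\End^0_{LL'}(J(C_1))$, which is $\dim_\Q M_3(\Q)=9$ because $J(C_1)\sim_\Q E_{21}^3$ and $E_{21}$ has no complex multiplication over $\overline\Q$. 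As $\Inf_L^{LL'}(3\chi_5)$ also has dimension $9$, it follows that $\theta(J(C_1),J(C_2);LL'/\Q)\simeq\Inf_L^{LL'}(3\chi_5)$, and, by the symmetric argument using $\phi_{13}$, that $\theta(J(C_1),J(C_3);LL'/\Q)\simeq\Inf_{L'}^{LL'}(3\chi_5')$.

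Next I would apply Proposition~\ref{trans} with $A_1=J(C_1)$, $A_2=J(C_2)$, $A_3=J(C_3)$, whose mutual isogenies are defined over $L$, $L'$ and $LL'$ respectively. It yields that $\theta_{32}$ is a subrepresentation of
$$\theta(J(C_1),J(C_2);LL'/\Q)\otimes\theta(J(C_1),J(C_3);LL'/\Q)\simeq\Inf_L^{LL'}(3\chi_5)\otimes\Inf_{L'}^{LL'}(3\chi_5')\simeq 9\cdot\Psi,$$
where $\Psi:=\Inf_L^{LL'}(\chi_5)\otimes\Inf_{L'}^{LL'}(\chi_5')$ has dimension $9$. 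Hence it suffices to prove $\Psi\simeq\psi_{13}$: then $\theta_{32}$ is a subrepresentation of $9\cdot\psi_{13}$ having the same dimension as $\psi_{13}$, which forces $\theta_{32}\simeq\psi_{13}$, i.e.\ $\Tr(\theta_{32})=\psi_{13}$.

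To pin down $\Psi$, I would first show it is irreducible. Restricting it to $\Gal(LL'/\Q(\sqrt{-3}))$, and using that $L\cap L'=\Q(\sqrt{-3})$ while $L/\Q(\sqrt{-3})$ and $L'/\Q(\sqrt{-3})$ are Galois, this group is isomorphic to $\Gal(L/\Q(\sqrt{-3}))\times\Gal(L'/\Q(\sqrt{-3}))$, a product of two copies of $A_4$, and $\Psi$ restricted to it is the external tensor product of the restrictions of $\chi_5$ and $\chi_5'$ to these $A_4$'s. From Table~\ref{ctL}, each such restriction takes the values $3,-1,0,0$ on the four classes of $A_4$, hence is irreducible of dimension $3$; therefore $\Psi$ is irreducible already on an index-$2$ subgroup, so $\Psi$ is irreducible. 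By Table~\ref{ctLL'} the only $9$-dimensional irreducible characters of $\Gal(LL'/\Q)$ are $\psi_{13}$ and $\psi_{14}$, and they satisfy $\psi_{14}=\psi_2\cdot\psi_{13}$, where $\psi_2$ is the unique nontrivial $1$-dimensional character of $\Gal(LL'/\Q)$. Since $\Q(\sqrt{-3})$ is the unique quadratic subfield of $LL'$ (and also the unique quadratic subfield of $L$ and of $L'$, as $\Gal(L/\Q)\simeq\Gal(L'/\Q)\simeq S_4$), the character $\psi_2$ is the quadratic character cutting out $\Q(\sqrt{-3})$, so $\psi_2=\Inf_L^{LL'}\chi_2=\Inf_{L'}^{LL'}\chi_2'$. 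Thus $\psi_{13}$ and $\psi_{14}$ agree on $\Ker\psi_2$ and differ by a sign off it, so it is enough to evaluate $\Psi$ on one class outside $\Ker\psi_2$, say $2D$. For $\sigma$ in the class $2D$ one has $\psi_2(\sigma)=-1$, so by $\psi_2=\Inf_L^{LL'}\chi_2$ the image of $\sigma$ in $\Gal(L/\Q)\simeq S_4$ is an odd permutation of order dividing $2$, and therefore a transposition, i.e.\ lies in $2b_L$; likewise its image in $\Gal(L'/\Q)$ lies in $2b_{L'}$. As $\chi_5(2b_L)=\chi_5'(2b_{L'})=1$ by Table~\ref{ctL}, we get $\Psi(2D)=1=\psi_{13}(2D)\neq-1=\psi_{14}(2D)$, whence $\Psi\simeq\psi_{13}$ and the proposition follows.

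I expect the main obstacle to be this last identification, $\Psi\simeq\psi_{13}$ rather than $\psi_{14}$: everything before it is dimension bookkeeping combined with the inflation and transitivity statements already proved, whereas here one must use genuine structural input about $\Gal(LL'/\Q)$ — the class fusion of $2D$ into $\Gal(L/\Q)$ and $\Gal(L'/\Q)$, and the identification of the abelianization of $\Gal(LL'/\Q)$ with $\Gal(\Q(\sqrt{-3})/\Q)$. As sketched, these can be extracted cheaply from the two character tables together with the $S_4$-combinatorics of $\Gal(L/\Q)$ and $\Gal(L'/\Q)$; a more pedestrian alternative would be to compute the full character of $\Psi$ by evaluating $\chi_5$ and $\chi_5'$ on the images of every conjugacy class of $\Gal(LL'/\Q)$ and matching the result against the $\psi_{13}$ row of Table~\ref{ctLL'}.
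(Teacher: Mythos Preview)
Your argument is correct and follows the same overall architecture as the paper: inflate $\theta_{21}$ and $\theta_{13}$ to $LL'$ via Proposition~\ref{sera}, match dimensions to get equality, apply Proposition~\ref{trans} to trap $\theta_{32}$ inside $9\cdot\Psi$ with $\Psi=\Inf_L^{LL'}\chi_5\cdot\Inf_{L'}^{LL'}\chi_5'$, and then identify $\Psi$ with $\psi_{13}$.

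The genuine difference is in this last identification. The paper computes the class fusion maps $\pi_L,\pi_{L'}$ with Magma, reads off $\Inf_L^{LL'}\chi_5=\psi_9$ and $\Inf_{L'}^{LL'}\chi_5'=\psi_7$ directly, and checks $\psi_9\cdot\psi_7=\psi_{13}$ from Table~\ref{ctLL'}. You instead prove $\Psi$ is irreducible by restricting to $\Gal(LL'/\Q(\sqrt{-3}))\simeq A_4\times A_4$ and recognising it as an external tensor product of two $3$-dimensional irreducibles, then separate $\psi_{13}$ from $\psi_{14}=\psi_2\psi_{13}$ by a single evaluation at $2D$, using only that $\psi_2$ cuts out $\Q(\sqrt{-3})$ and that an odd involution in $S_4$ is a transposition. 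Your route is Magma-free and conceptually cleaner; the paper's route is more mechanical but gives the full class fusion (and the intermediate identifications $\psi_7,\psi_9$) as a by-product. Either way the conclusion is the same.
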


\begin{proof} First we need to know how the conjugacy classes of $\Gal(L  L'/\Q)$ project onto the conjugacy classes of the quotients $\Gal(L/\Q)$ and $\Gal(L'/\Q)$.  Denote by $\pi_L $ and $\pi_{L'}$ the canonical projections from $\Gal(LL'/\Q)$ to $\Gal(L/\Q)$ and $\Gal(L'/\Q)$, respectively. With the help of Magma (see \cite{Mag}), we obtain:
$$
\begin{array}{|l| l|}\hline
1a_L = \pi_L(1A\cup 2B\cup 3A)  &  1a_{L'}=\pi_{L'}(1A\cup 2A\cup 3B)  \\\hline
2a_L = \pi_L(2A\cup 2C\cup 6A)  &  2a_{L'}=\pi_{L'}(2B\cup 2C\cup 6B)  \\\hline
2b_L = \pi_L(2D\cup 4C)  &  2b_{L'}=\pi_{L'}(2D \cup 4B)  \\\hline
3a_L = \pi_L(3B\cup 3C\cup 3D\cup 6B)  &  3a_{L'}=\pi_{L'}(3A\cup 3C \cup 3D\cup 6A)  \\\hline
4a_L = \pi_L(4A\cup 4B)  &  4a_{L'}=\pi_{L'}(4A \cup 4C)  \\\hline
\end{array}
$$
It is now easy to compute that $\Tr\Inf_L^{L  L'}(\varrho_5)=\psi_{9}$ and that $\Tr\Inf_{L'}^{L  L'}(\varrho_5')=\psi_{7}$. Proposition \ref{sera} says that $\Inf_L^{L  L'}\theta_{21}$ and $\Inf_L^{L  L'}\theta_{13}$ are respectively subrepresentations of $\theta(J(C_2),J(C_1);LL'/\Q)$ and $\theta(J(C_1),J(C_3);LL'/\Q)$. In fact, since they have the same dimension, they coincide. Proposition \ref{trans} states that $\theta_{32}$ is a subrepresentation of
$$\theta(J(C_2),J(C_1);LL'/\Q)\otimes \theta(J(C_1),J(C_3);LL'/\Q)\,.$$
This representation has trace $9\cdot \psi_{9}\cdot \psi_7=9\cdot \psi_{13}$ and now the fact that $\theta_{32}$ has dimension $9$ ensures that $\Tr(\theta_{32})=\psi_{13}$.
\end{proof}

We have seen that $\theta(E_{63}^3,J(C_2);L/\Q)\simeq 3\cdot \varrho_4$. Applying cororllary \ref{collde}, we obtain that $V_\ell(J(C_2))\simeq \varrho_4\otimes V_\ell(E_{63})$, from which the next corollary follows.

\begin{coro}
For every prime $p$ which is unramified in $L$, the local factor $L_p(J(C_2)/\Q,T)$ coincides with the Rankin-Selberg polynomial $L_p(E_{63}/\Q,\varrho_4,T)$.
\end{coro}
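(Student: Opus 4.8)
The plan is to read the identity off the isomorphism $V_\ell(J(C_2))\simeq\varrho_4\otimes V_\ell(E_{63})$ of $\Q_\ell[G_\Q]$-modules recorded just above. This isomorphism is exactly what corollary \ref{collde} gives once its hypotheses are checked here: $A=E_{63}^3$ satisfies $A\sim_\Q E^3$ with $E=E_{63}$, and $\End_\Q^0(E_{63})\simeq\Q$ because $E_{63}$ has no complex multiplication; $A'=J(C_2)$ is simple over $\Q$; and $\theta(E_{63}^3,J(C_2);L/\Q)\simeq 3\cdot\varrho_4$, so the representation $\varrho$ appearing in that corollary is $\varrho_4$.

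First I would fix a prime $p$ unramified in $L$, a prime $\overline p$ of $\overline\Q$ above it, its inertia group $I_{\overline p}$, and a Frobenius element $\Frob_{\overline p}$ over $p$. The key point is that $\varrho_4$ factors through $\Gal(L/\Q)$, so $I_{\overline p}$ acts trivially on the first tensor factor of $\varrho_4\otimes V_\ell(E_{63})$; consequently taking $I_{\overline p}$-invariants commutes with tensoring by $\varrho_4$, and
\[
V_\ell(J(C_2))^{I_{\overline p}}\simeq\varrho_4\otimes V_\ell(E_{63})^{I_{\overline p}}
\]
as modules over the decomposition group at $\overline p$; in particular as $\Frob_{\overline p}$-modules.

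I would then compare the characteristic polynomials $\det(1-\Frob_{\overline p}\,T;\,-)$ on the two sides. On the left this is $L_p(J(C_2)/\Q,T)$ by definition. On the right, the eigenvalues of $\Frob_{\overline p}$ on $\varrho_4\otimes V_\ell(E_{63})^{I_{\overline p}}$ are precisely the products of an eigenvalue of $\varrho_4(\Frob_p)$ with an eigenvalue of $\varrho_{E_{63}}(\Frob_{\overline p})$ on $V_\ell(E_{63})^{I_{\overline p}}$, that is, the products of a root of $\det(1-\varrho_4(\Frob_p)T)$ with a root of $L_p(E_{63}/\Q,T)$; by the description recalled in the Introduction, the polynomial with constant term $1$ having exactly these reciprocal roots is $L_p(E_{63}/\Q,\varrho_4,T)$. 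Hence $L_p(J(C_2)/\Q,T)=L_p(E_{63}/\Q,\varrho_4,T)$. As a byproduct this reproves that the Rankin--Selberg polynomial has integer coefficients, since the left-hand side does.

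I do not anticipate a genuine obstacle: the argument is pure bookkeeping around corollary \ref{collde}. The one step needing care is the commutation of $(-)^{I_{\overline p}}$ with $\otimes\,\varrho_4$, which is precisely where the hypothesis ``$p$ unramified in $L$'' enters; it is also what makes primes of bad reduction for $E_{63}$ or $J(C_2)$ harmless, since good reduction is never used. Should one prefer, one could first establish the identity on the density-one set of primes of good reduction for both $E_{63}$ and $J(C_2)$ and extend it to all unramified $p$ by strict compatibility, but the direct route above treats every unramified $p$ uniformly.
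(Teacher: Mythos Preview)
Your proposal is correct and is precisely the paper's approach: the paper derives the corollary in one line from the isomorphism $V_\ell(J(C_2))\simeq\varrho_4\otimes V_\ell(E_{63})$ (obtained via the corollary you cite, whose hypotheses you verify correctly), and you have simply spelled out the routine passage from that isomorphism to the equality of local factors, including the role of the hypothesis that $p$ is unramified in $L$.
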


\begin{coro} For every prime $p$ unramified in $L$ of good reduction for both $E_{63}$ and $J(C_2)$, we have
$$\#\tilde C_2(\F_{p^r})=(1+p^r)(1-\Tr\varrho_4(\Frob_p^r))+\Tr\varrho_4(\Frob_p^r)\#\tilde E_{63}(\F_{p^r})\,.$$
\end{coro}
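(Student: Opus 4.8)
The plan is to feed the preceding corollary, which identifies $L_p(J(C_2)/\Q,T)$ with the Rankin-Selberg polynomial $L_p(E_{63}/\Q,\varrho_4,T)$, into the Lefschetz--Weil point-counting formula for curves. Recall that for a smooth projective curve $C/\Q$ with good reduction at $p$ one has, for every $r\geq 1$,
$$\#\tilde C(\F_{p^r})=p^r+1-\sum_i\gamma_i^r\,,$$
where the $\gamma_i$ are the roots of $L_p(J(C)/\Q,T)$ in the sense used throughout the paper (i.e. $L_p(J(C)/\Q,T)=\prod_i(1-\gamma_iT)$, the $\gamma_i$ being the Frobenius eigenvalues on the degree-one part of the zeta function, of absolute value $p^{1/2}$); this comes out of the usual computation of the logarithmic derivative of $Z(\tilde C/\F_p,T)$. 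In particular, writing $L_p(E_{63}/\Q,T)=(1-\alpha T)(1-\overline\alpha T)$ with $\alpha\overline\alpha=p$, the case of genus one gives $\alpha^r+\overline\alpha^r=p^r+1-\#\tilde E_{63}(\F_{p^r})$.

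First I would invoke the preceding corollary: since $p$ is unramified in $L$, $L_p(J(C_2)/\Q,T)=L_p(E_{63}/\Q,\varrho_4,T)$, and by the definition of the Rankin-Selberg polynomial recalled in the Introduction the roots of the latter are exactly the products $\alpha\mu_j$ and $\overline\alpha\mu_j$ for $j=1,2,3$, where $\mu_1,\mu_2,\mu_3$ are the eigenvalues of $\varrho_4(\Frob_p)$. Applying the point-counting formula to $\tilde C_2$ (note that good reduction of $J(C_2)$ at $p$ is equivalent to good reduction of $C_2$ at $p$) therefore yields
$$\#\tilde C_2(\F_{p^r})=p^r+1-\sum_{j=1}^{3}\bigl(\alpha^r\mu_j^r+\overline\alpha^r\mu_j^r\bigr)=p^r+1-(\alpha^r+\overline\alpha^r)\sum_{j=1}^{3}\mu_j^r\,.$$

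To finish, I would note that $\sum_{j=1}^{3}\mu_j^r=\Tr\bigl(\varrho_4(\Frob_p)^r\bigr)=\Tr\varrho_4(\Frob_p^r)$, because $\varrho_4$ is a group homomorphism factoring through $\Gal(L/\Q)$ (where $\Frob_p$ is well defined up to conjugacy, $p$ being unramified in $L$), and then substitute $\alpha^r+\overline\alpha^r=p^r+1-\#\tilde E_{63}(\F_{p^r})$ to obtain
$$\#\tilde C_2(\F_{p^r})=p^r+1-\bigl(p^r+1-\#\tilde E_{63}(\F_{p^r})\bigr)\,\Tr\varrho_4(\Frob_p^r)\,,$$
which is exactly the asserted identity after expanding and regrouping the terms. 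The only delicate point in the whole argument is the bookkeeping of conventions: one must be sure that the reciprocal roots of $L_p(J(C_2)/\Q,T)$ are precisely the quantities entering the point count of $C_2$ with the normalization $|\gamma_i|=p^{1/2}$, and that the scalars $\mu_j$ built into the Rankin-Selberg polynomial are the eigenvalues of $\varrho_4(\Frob_p)$ rather than their inverses. Since $\varrho_4$ has finite image and $\chi_4$ is real (its eigenvalues $1,i,-i$ on the class $4a_L$ being stable under inversion), this last ambiguity does not affect the relevant trace, and everything else is a routine calculation.
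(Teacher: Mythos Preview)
Your proof is correct and follows essentially the same approach as the paper's own proof. The paper likewise writes the Frobenius eigenvalues on $J(C_2)$ as the products $\lambda_i\alpha$, $\lambda_i\overline\alpha$ (with $\lambda_i$ the eigenvalues of $\varrho_4(\Frob_p)$), applies the Weil point-count $\#\tilde C_2(\F_{p^r})=1+p^r-\sum_i(\lambda_i\alpha)^r+(\lambda_i\overline\alpha)^r$, factors out $\sum_i\lambda_i^r=\Tr\varrho_4(\Frob_p^r)$, and substitutes $\alpha^r+\overline\alpha^r=1+p^r-\#\tilde E_{63}(\F_{p^r})$; your discussion of the eigenvalue-versus-inverse ambiguity is a welcome addition but, as you note, is moot here since $\varrho_4$ is self-dual. (One small caveat: the parenthetical assertion that good reduction of $J(C_2)$ is \emph{equivalent} to good reduction of $C_2$ is not true in general for curves of genus $\geq 2$; the paper simply uses the point-count formula without comment, so you are in no worse shape than the original.)
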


\begin{proof} Let $\alpha$ and $\overline\alpha$ be the reciprocals of the roots of $L_p(E_{63}/\Q,T)$ and let $\lambda_i$ denote the roots of $\det(1-\varrho_4(\Frob_p)T)$. Then, we have
$$
\begin{array}{l@{\,=\,}l}
\#\tilde C_2(\F_{p^r}) & \displaystyle{1+p^r-\sum_i(\lambda_i\alpha)^r+(\lambda_i\overline\alpha)^r}\\[6pt]
 & \displaystyle{1+p^r-\Tr\varrho_4(\Frob_p^r)(\alpha^r+\overline\alpha^r)}\\[6pt]
 & \displaystyle{ 1+p^r-\Tr\varrho_4(\Frob_p^r)(1+p^r-\#\tilde E_{63}(\F_{p^r}))\,.}\\[6pt]
\end{array}
$$
\end{proof}

To conclude, we will compute the moments of the Sato-Tate distributions of the traces of the local factor $L_p(J(C_2),T)$. For an abelian variety $A$ over $\Q$ of dimension $g$,  and $p$ a prime of good reduction of $A$, it will be convenient to consider the normalized local factor
$$\overline L_p(A/\Q,T)=L_p(A/\Q,p^{-1/2}T)=\sum_{i=0}^{2g}(-1)^i\overline a_i(p)T^i\,,$$
where $\overline a_i(p)=\overline a_{2g-i}(p),\,|\overline a_i(p)|\leq \binom{2g}{i}\,.$ For $n\geq 0$, let $M_{n}(\overline a_i)=\E(\overline a_i^{n})$ denote the $n$-th moment of $\overline a_i$, that is, the expected value of $\overline a _i^n$, seen as a random variable over the set of primes of $\Q$.
If $A$ is an elliptic curve without complex multiplication, the recent proof of the Sato-Tate Conjecture guarantees that
\begin{equation}\label{momelliptic}
M_{2n}(\overline a_1)=\frac{1}{n+1}\binom{2n}{n}\,,\qquad M_{2n+1}(\overline a_1)=0.
\end{equation}
We will write $c_n= 1/(n+1)\binom{2n}{n}$ for the $n$-th Catalan number (see \cite{KS09} for an account on this and much more). We will write $\overline a=\overline a_1(E_{63})$ and $\overline a _i=\overline a_i(J(C_2))$ for $1\leq i\leq 3$.

\begin{lema} For every prime $p$ of good reduction for both $E_{63}$ and $J(C_2)$:
\begin{enumerate}[i)]
\item $\overline a_1(p) = \overline a(p) \cdot \Tr \varrho_4(\Frob_p)$.
\item $\overline a_2(p) =\Tr\varrho_4(\Frob_p)\cdot (\overline a(p)^2-2+ \Tr\varrho_4(\Frob_p))$.
\item $\overline a_3(p) = \overline a(p)\cdot (\overline a(p) ^2+\Tr\varrho_4(\Frob_p)^2-3$).
\end{enumerate}
\end{lema}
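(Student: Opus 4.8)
The plan is to exploit the isomorphism $V_\ell(J(C_2))\simeq\varrho_4\otimes V_\ell(E_{63})$ obtained just before the statement. Exactly as in the proof of the preceding corollary, this shows that for $p$ a prime of good reduction for both $E_{63}$ and $J(C_2)$ — a condition that forces $p$ to be unramified in $L$, so that $\Frob_p$ makes sense in $\Gal(L/\Q)$ — the reciprocal roots of $L_p(J(C_2)/\Q,T)$ are the six numbers $\lambda_i\alpha$ and $\lambda_i\overline\alpha$, $i=1,2,3$, where $\lambda_1,\lambda_2,\lambda_3$ are the eigenvalues of $\varrho_4(\Frob_p)$ and $(1-\alpha T)(1-\overline\alpha T)=L_p(E_{63}/\Q,T)$ with $\alpha\overline\alpha=p$. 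Normalizing by $p^{-1/2}$ and setting $\beta=p^{-1/2}\alpha$, so that $\beta\overline\beta=1$ and $\beta+\overline\beta=\overline a(p)$, the normalized reciprocal roots of $L_p(J(C_2)/\Q,T)$ are $\lambda_i\beta$ and $\lambda_i\overline\beta$; hence $\overline a_j(p)$ equals the $j$-th elementary symmetric function $e_j$ of these six numbers for $j=1,2,3$, and the problem is reduced to expanding these symmetric functions.

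The next step is to record two identities for $\varrho_4$ that allow everything to be expressed through $\overline a(p)$ and $t:=\Tr\varrho_4(\Frob_p)=\lambda_1+\lambda_2+\lambda_3$:
\[
\lambda_1\lambda_2+\lambda_1\lambda_3+\lambda_2\lambda_3=t,\qquad \lambda_1\lambda_2\lambda_3=1.
\]
Their left-hand sides are $\Tr\bigl(\wedge^2\varrho_4(\Frob_p)\bigr)$ and $\det\varrho_4(\Frob_p)$. For a $3$-dimensional representation one has $\wedge^2\varrho_4\simeq\varrho_4^*\otimes\det\varrho_4$; since $\varrho_4$ is self-dual (its character $\chi_4$ is real-valued) and $\det\varrho_4$ is the trivial character $\chi_1$ — facts that can be read off Table \ref{ctL} by inspecting the eigenvalues listed on each conjugacy class — one gets $\wedge^2\varrho_4\simeq\varrho_4$ and the two displayed identities. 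Alternatively, both can be verified directly, class by class, from Table \ref{ctL}.

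Finally I would expand $e_1$, $e_2$, $e_3$ of $\{\lambda_i\beta,\lambda_i\overline\beta\}$, grouping monomials by how many $\beta$-type and how many $\overline\beta$-type factors they contain and using $\beta\overline\beta=1$. This yields
\begin{align*}
e_1 &= (\beta+\overline\beta)(\lambda_1+\lambda_2+\lambda_3),\\
e_2 &= (\beta^2+\overline\beta^2)(\lambda_1\lambda_2+\lambda_1\lambda_3+\lambda_2\lambda_3)+(\lambda_1+\lambda_2+\lambda_3)^2,\\
e_3 &= (\beta^3+\overline\beta^3)\,\lambda_1\lambda_2\lambda_3+(\beta+\overline\beta)(\lambda_1\lambda_2+\lambda_1\lambda_3+\lambda_2\lambda_3)(\lambda_1+\lambda_2+\lambda_3),
\end{align*}
where in $e_2$ the last summand collects the nine ``mixed'' pairs $\{\lambda_i\beta,\lambda_j\overline\beta\}$. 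Substituting the two identities above together with $\beta^2+\overline\beta^2=\overline a(p)^2-2$ and $\beta^3+\overline\beta^3=\overline a(p)^3-3\overline a(p)$ gives i), ii) and iii) respectively. The computation is routine; the only steps where an error could creep in are the bookkeeping in $e_2$ and $e_3$ — in particular noticing that the mixed contribution to $e_2$ is $(\sum_i\lambda_i)^2$ rather than $\sum_{i<j}\lambda_i\lambda_j$ — and the verification of the two structural identities for $\varrho_4$, which is really the crux of the whole argument.
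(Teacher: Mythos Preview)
Your proof is correct and follows the same approach as the paper: both start from the tensor decomposition $V_\ell(J(C_2))\simeq\varrho_4\otimes V_\ell(E_{63})$ and reduce to computing elementary symmetric functions of $\{\lambda_i\beta,\lambda_i\overline\beta\}$. The only cosmetic difference is how the key input about the $\lambda_i$ is packaged: the paper simply records that the eigenvalues of $\varrho_4(\Frob_p)$ are $1,\zeta_r,\overline\zeta_r$ when $\Frob_p$ has order $r$ (read off from Table~\ref{ctL}), from which $\sum_{i<j}\lambda_i\lambda_j=t$ and $\lambda_1\lambda_2\lambda_3=1$ are immediate, whereas you derive these same identities via $\det\varrho_4=\chi_1$ and $\wedge^2\varrho_4\simeq\varrho_4^*\simeq\varrho_4$ --- an equivalent but slightly more structural formulation.
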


\begin{proof} It follows from the fact that $V_\ell(J(C_2))\simeq V_\ell(E_{63})\otimes \varrho_4$ and that the eigenvalues of $\varrho_4(\Frob_p)$ are $1$, $\zeta_r$ and $\overline \zeta_r$ if $\Frob_p$ has order $r$ in $\Gal(L/\Q)$.
\end{proof}

\begin{prop}Let $G=\Gal(L/\Q)$ and $t_\sigma=\Tr\varrho_4(\sigma)$, for any $\sigma \in G$. For $n\geq 0$, one has that $M_{2n+1}(\overline a_1)=M_{2n+1}(\overline a_3)=0$ and:
\begin{enumerate}[i)]
\item $M_{2n}(\overline a_1)=\frac{1}{|G|}\sum_{\sigma\in G} t_\sigma^{2n}c_n$.
\item $M_{n}(\overline a_2)=\frac{1}{|G|} \sum_{i=0}^n\binom{n}{i} c_i\left(\sum_{\sigma\in G} t_\sigma^{n}(t_\sigma-2)^{n-i}\right)$.
\item $M_{2n}(\overline a_3)=\frac{1}{|G|}\sum_{i=0}^{2n}\binom{2n}{i} c_{i+n}\left(\sum_{\sigma\in G}(t_\sigma^2-3)^{2n-i}\right)$

\end{enumerate}
\end{prop}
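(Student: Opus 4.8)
The plan is to reduce the computation of all three families of moments to a single equidistribution statement and then to an elementary binomial bookkeeping. By the preceding lemma, each normalized trace $\overline a_i(p)$ (for $i=1,2,3$) is an explicit polynomial in the normalized Frobenius trace $\overline a(p)$ of $E_{63}$ and in $t_{\Frob_p}=\Tr\varrho_4(\Frob_p)$: writing $x=\overline a(p)$ and $y=t_{\Frob_p}$, one has $\overline a_1=xy$, $\overline a_2=y(x^2-2+y)$ and $\overline a_3=x(x^2+y^2-3)$. Hence $M_m(\overline a_i)$ is the average over primes of good reduction of $P_i(x,y)^m$ for the corresponding polynomial $P_i$, and it suffices to average, over such primes, any monomial $x^j y^k$.

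The key input is that the normalized Frobenius of $E_{63}$ and the conjugacy class of $\Frob_p$ in $G=\Gal(L/\Q)$ equidistribute \emph{independently}: the limiting joint distribution of the pair is the product of the Sato--Tate measure $\mu$ on $[-2,2]$ (the limiting distribution of $\overline a(p)$, which exists by the Sato--Tate conjecture for the non-CM curve $E_{63}$) and the uniform measure on $G$ furnished by the Chebotarev density theorem. Equivalently, the Sato--Tate group of $J(C_2)$ is $\mathrm{SU}(2)\times G$, embedded in $\mathrm{USp}(6)$ by $g\otimes\varrho_4(\sigma)$ (this is symplectic because $\varrho_4$ is orthogonal, and the map is injective because $-I_3\notin\varrho_4(G)$, as $\chi_4$ never takes the value $-3$ in Table \ref{ctL}), and the normalized local factors $\overline L_p(J(C_2)/\Q,T)$ equidistribute for its Haar measure. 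This independence follows from the analytic properties — meromorphic continuation to $\mathrm{Re}(s)\ge 1$, holomorphy and non-vanishing there away from $s=1$, with a pole at $s=1$ only for the trivial constituent — of the twisted symmetric-power $L$-functions $L(\mathrm{Sym}^n E_{63}\otimes\chi,s)$ for the irreducible Artin characters $\chi$ of $G$, fed into the standard Tauberian argument; these analytic inputs are supplied by the automorphy/potential-automorphy results underlying the proof of the Sato--Tate conjecture (cf.\ \cite{FS12}). Granting this, for any polynomial $Q(x,y)$ one has
$$\E\big(Q(\overline a(p),t_{\Frob_p})\big)=\frac{1}{|G|}\sum_{\sigma\in G}\int_{-2}^{2}Q(x,t_\sigma)\,d\mu(x),\qquad \int_{-2}^{2}x^{2n}\,d\mu(x)=c_n,\qquad \int_{-2}^{2}x^{2n+1}\,d\mu(x)=0,$$
the last two evaluations being exactly \eqref{momelliptic}.

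What remains is bookkeeping. For $\overline a_1$: since $P_1^m=x^m y^m$, the odd moments vanish (as $\int x^{2k+1}\,d\mu=0$) and $M_{2n}(\overline a_1)=\frac{1}{|G|}\sum_\sigma t_\sigma^{2n}\,c_n$, which is i). For $\overline a_2$: expand $P_2^n=y^n\big((x^2)+(y-2)\big)^n=\sum_{i=0}^n\binom{n}{i}x^{2i}\,y^n(y-2)^{n-i}$, integrate in $x$ to replace $x^{2i}$ by $c_i$, and sum over $\sigma$; this is ii). For $\overline a_3$: $P_3^{2n+1}$ involves only odd powers of $x$, so its moments vanish, while $P_3^{2n}=x^{2n}\big((x^2)+(y^2-3)\big)^{2n}=\sum_{i=0}^{2n}\binom{2n}{i}x^{2n+2i}\,(y^2-3)^{2n-i}$, and replacing $x^{2n+2i}$ by $c_{n+i}$ and summing over $\sigma$ gives iii).

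The main obstacle is precisely the independence (joint equidistribution) statement of the second paragraph: the plain Sato--Tate conjecture for $E_{63}$ suffices for statements about $\overline a(p)$ alone, but decoupling it from the Frobenius class in the non-abelian $\mathcal S_4$-extension $L$ amounts to showing that the algebraic monodromy of the compatible system $\{V_\ell(E_{63})\otimes\varrho_4\}$ is as large as $\mathrm{SL}_2\times G$ — in particular that no hidden relation (which could only arise from extra endomorphisms of $E_{63}$, absent here, or from $\varrho_4$ being reducible or imprimitive in a relevant way) shrinks the Sato--Tate group — together with the automorphy of the twisted symmetric powers needed to run the Tauberian argument. Once this is granted, everything reduces to the elementary expansions above.
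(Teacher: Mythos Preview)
Your proof is correct and follows the same approach as the paper: use the preceding lemma to write each $\overline a_i(p)$ as a polynomial in $\overline a(p)$ and $t_{\Frob_p}$, invoke the independence of the Sato--Tate distribution of $\overline a$ and the Chebotarev distribution of $\Frob_p$ in $G$, and then expand. The paper's own proof is a one-sentence sketch of exactly this strategy (phrasing the independence as ``the restriction of $\overline a$ to primes with $\Frob_p=c$ has the same distribution as $\overline a$''); you supply the binomial bookkeeping explicitly and, helpfully, spell out what underlies the joint equidistribution step.
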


\begin{proof}
It follows from the previous lemma, the Cebotarev density theorem, equation (\ref{momelliptic}), and the fact that the restriction of $\overline a$ at the set of primes $p$ for which $\Frob_p=c$, where $c$ is a certain conjugacy class in $G$, has the same distribution as $\overline a$.




\end{proof}

\end{document}